\newtheorem{theorem}{Theorem}[section]
\newtheorem{proposition}[theorem]{Proposition}
\newtheorem{lemma}[theorem]{Lemma}
\newtheorem{corollary}[theorem]{Corollary}
\theoremstyle{definition}
\newtheorem{definition}[theorem]{Definition}
\newtheorem{remark}[theorem]{Remark}
\newcommand{\C}{\mathbb{C}}
\newcommand{\OO}{\mathscr{O}}
\newcommand{\CC}{\mathscr{C}}
\newcommand{\LL}{\mathcal{L}}
\newcommand{\R}{\mathbb{R}}
\newcommand{\N}{\mathbb{N}}
\newcommand{\Aut}{\operatorname{Aut}}
\newcommand{\id}{\operatorname{id}}
\renewcommand{\div}{\operatorname{div}}
\title{A soft Oka principle for proper holomorphic embeddings of open Riemann surfaces into $(\C^*)^2$}
\author{Tyson Ritter}
\address{Matematisk Institutt, Universitetet i Oslo, Postboks 1053 Blindern, 0316 Oslo, Norway} 
\email{tysonr@math.uio.no}
\subjclass[2010]{Primary 32H02. Secondary 32C22, 32G05, 32H35, 32M17, 32Q40, 32Q55.}
\keywords{Proper holomorphic embedding, Riemann surface, Oka principle, volume density property.}
\date{26 March 2014. Latest minor changes 6 May 2014.}
\thanks{The author was supported by Norwegian Research Council grant NFR-209751/F20.}
\begin{document}

\begin{abstract}
Let $X$ be an open Riemann surface. We prove an Oka property on the approximation and interpolation of continuous maps $X \to (\C^*)^2$ by proper holomorphic embeddings, provided that we permit a smooth deformation of the complex structure on $X$ outside a certain set. This generalises and strengthens a recent result of Alarc\'on and L\'opez. We also give a Forstneri\v c-Wold theorem for proper holomorphic embeddings (with respect to the given complex structure) of certain open Riemann surfaces into $(\C^*)^2$.
\end{abstract}

\maketitle
\tableofcontents

\section{Introduction}
\label{sec:introduction}

\noindent
It is a long-standing and difficult question of complex geometry whether every open (that is, non-compact) Riemann surface can be properly holomorphically embedded in 2-dimensional complex affine space $\C^2$. We refer the reader to the introductions of the papers \cite{ForstnericWold2009} and \cite{Ritter2013}, as well as the recent monograph \cite[\S\S8.9--8.10]{Forstneric2011}, for details of the progress made towards a solution of this problem. Of particular importance to this paper are the powerful techniques introduced by Wold in \cite{Wold2006a,Wold2006} where he proved that every finitely connected planar domain in $\C$ properly holomorphically embeds into $\C^2$, thereby extending a previous result of Globevnik and Stens\o nes \cite{GlobevnikStensones1995} for a restricted class of such domains. By applying Wold's techniques, Forstneri\v c and Wold \cite{ForstnericWold2009} subsequently proved that, given a holomorphic embedding of a compact bordered Riemann surface into $\C^2$ (see Definitions~\ref{def:borderedsurface} and \ref{def:embedding}), there exists a proper holomorphic embedding into $\C^2$ of the open Riemann surface that is its interior.

In \cite{Ritter2013} I adapted Wold's embedding techniques to the target manifold $\C\times\C^*$ and proved a corresponding Forstneri\v c-Wold embedding theorem for the interior of compact bordered Riemann surfaces, with added control over the homotopy class of the embedding. Using this, I obtained an \emph{Oka principle for proper holomorphic embeddings} of bounded finitely connected planar domains without isolated boundary points, namely that, given such a domain $X$, every continuous map $f : X \to \C\times\C^*$ can be deformed to a proper holomorphic embedding. Some progress towards generalising this to arbitrary finitely connected planar domains was made in \cite{LarussonRitter2012}. Here, the term \emph{Oka principle for proper holomorphic embeddings} comes by way of comparison to the Oka principle of Gromov \cite{Gromov1989}, which in this setting guarantees the existence of a holomorphic deformation of $f$, but without any claims as to its properness, injectivity, or immersivity.

By allowing deformations of the complex structure on an open Riemann surface we can investigate whether there are any topological obstructions to obtaining proper holomorphic embeddings into $\C^2$. Following this idea, \v Cerne and Forstneric \cite{CerneForstneric2002} proved that every open Riemann surface that is the interior of a compact bordered Riemann surface can be given a complex structure with respect to which it properly holomorphically embeds into $\C^2$. Extending this result, Alarc\'on and L\'opez \cite{AlarconLopez2013} recently applied the embedding techniques of Forstneri\v c and Wold to prove the same fact for arbitrary open Riemann surfaces, showing that the topology of the surface plays no role in the problem.

In the current paper we pursue a similar idea, considering proper holomorphic embeddings of arbitrary open Riemann surfaces $X$ on which we permit deformations of the complex structure, but with target $(\C^*)^2$ rather than $\C^2$ (this choice of target is explained later in the introduction following Corollary~\ref{cor:deformstructureembedding}). Rather than simply exhibiting the existence of such embeddings, however, our focus is on deforming a given continuous map $X \to (\C^*)^2$ into a proper holomorphic embedding that satisfies several additional conditions.

A simplified version of our main result is as follows. Note that at this point we do not yet consider any change to the complex structure on the open Riemann surface $X$.

\begin{theorem}
\label{thm:simplifiedmaintheorem}
Let $X$ be an open Riemann surface and $K \subset X$ be an $\OO(X)$-convex compact set. Let $f : X \to (\C^*)^2$ be a continuous map that restricts to a holomorphic embedding of an open neighbourhood of $K$. Then there exists a smooth isotopy of smooth embeddings $\Phi_t : X \to X$, $t \in [0,1]$, with $\Phi_0 = \id_X$, and a continuous family of continuous maps $f_t : \Phi_t(X) \to (\C^*)^2$, $t \in [0,1]$, with $f_0 = f$, such that $f_1 : \Phi_1(X) \to (\C^*)^2$ is a proper holomorphic embedding.

Furthermore, we can ensure that for all $t \in [0,1]$, $\Phi_t$ restricts to the identity on some fixed open neighbourhood of $K$, and that on some smaller open neighbourhood of $K$ the family $f_t$ restricts to a family of holomorphic embeddings that uniformly approximate $f$ on $K$.
\end{theorem}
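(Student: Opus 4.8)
The plan is to reduce the statement to a Forstnerič–Wold-type embedding theorem for the interior of a compact bordered Riemann surface, at the cost of deforming the complex structure on $X$ away from $K$. First I would exhaust $X$ by a sequence of smoothly bounded compact domains $K \subset X_1 \Subset X_2 \Subset \cdots$ with $\bigcup_j X_j = X$, each $X_j$ the interior of a compact bordered Riemann surface and each $\overline{X_j}$ an $\OO(X)$-convex (equivalently, Runge) domain. On $X_1$ nothing needs to change, but the key geometric point is that $X$ is \emph{not} in general the interior of a compact bordered surface, so there is no single ambient compact bordered surface to apply the known results to. To get around this, I would instead build the deformed structure and the embedding simultaneously by an inductive exhaustion argument, glueing together embeddings of the pieces $X_j$.

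The central analytic input is the $\C\times\C^*$-style machinery from \cite{Ritter2013}, adapted to the target $(\C^*)^2$: given a holomorphic embedding of a compact bordered Riemann surface into $(\C^*)^2$, together with a Runge sublevel set on which we wish to freeze the map, one can push the boundary "to infinity in $(\C^*)^2$" while remaining an embedding, approximating on the sublevel set and staying in a prescribed homotopy class. Using the Oka principle of Gromov for maps into $(\C^*)^2$ (which is an Oka manifold, being a quotient of $\C^2$ by a lattice), I would first homotope $f$ to a map that is holomorphic on a neighbourhood of $\overline{X_1}$ while leaving $f|_K$ essentially unchanged up to small perturbation, recording its homotopy class. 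Then, inductively, having produced a complex structure $J_j$ on a neighbourhood of $\overline{X_j}$ (agreeing with the original near $K$) and a $J_j$-holomorphic embedding $g_j : (\overline{X_j}, J_j) \to (\C^*)^2$ together with an identification of $X_j$ with a smoothly bounded domain in it, I would: (i) extend/modify $J_j$ to a structure on a neighbourhood of $\overline{X_{j+1}}$ for which $X_{j+1}$ is the interior of a compact bordered surface and $\overline{X_j}$ sits Runge inside it — this uses that any smoothly bounded domain can be so realised after a smooth deformation supported off $\overline{X_j}$; (ii) use Gromov's Oka principle to produce a $J_{j+1}$-holomorphic map on a neighbourhood of $\overline{X_{j+1}}$ approximating $g_j$ on $\overline{X_{j-1}}$ and in the right homotopy class; (iii) apply the adapted Forstnerič–Wold step to this map to obtain an embedding $g_{j+1}$ of $(\overline{X_{j+1}}, J_{j+1})$ that approximates $g_j$ on $\overline{X_{j-1}}$ and whose boundary exhausts a suitable exhaustion function of $(\C^*)^2$ up to height $\to \infty$. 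Arranging the approximations in (ii)–(iii) to be fast enough (à la the standard $\epsilon_j \to 0$ telescoping) makes the sequence $g_j$ converge locally uniformly on $(X, J)$, where $J := \lim J_j$ is a smooth complex structure on $X$ equal to the original near $K$, to a holomorphic map $g : (X, J) \to (\C^*)^2$. Properness comes from the boundary-to-infinity control, injectivity and immersivity from a Cauchy-estimate/quantitative-embedding argument standard in this circle of ideas, and the embedding remains in the prescribed homotopy class. Finally, since any two smooth complex structures on an orientable surface differ by a smooth isotopy of the underlying surface (indeed $(X, J)$ and $(X, J_{\mathrm{orig}})$ are diffeomorphic by a diffeomorphism isotopic to the identity and fixing a neighbourhood of $K$, as the structures agree there), I would transport everything back to $(X, J_{\mathrm{orig}})$ via a smooth isotopy $\Phi_t$ fixing a neighbourhood of $K$, with $\Phi_1$ the diffeomorphism intertwining the two structures; the maps $f_t$ are then obtained by concatenating the Gromov homotopy (done on $K$-fixing neighbourhoods) with the push-off deformations and the transport, reparametrised to a single family over $[0,1]$.

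The main obstacle I expect is step (iii) in the induction — the adapted Forstnerič–Wold push-off — carried out \emph{uniformly along an exhaustion} rather than once. In \cite{Ritter2013} and \cite{ForstnericWold2009} the bordered surface is fixed, whereas here at each stage we enlarge both the domain and the (deformed) complex structure, so we must ensure the new embedding does not destroy the properness progress made on earlier pieces: the boundary of $g_{j+1}(\overline{X_{j+1}})$ must lie in the region $\{\rho > j\}$ of a fixed exhaustion $\rho$ of $(\C^*)^2$ while $g_{j+1}$ stays $C^0$-close to $g_j$ on $\overline{X_{j-1}}$, which forces a careful interleaving of the Runge approximation and the boundary manipulation. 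Controlling the homotopy class simultaneously (so that the final $f_1$ is genuinely homotopic to $f$ through the stated family) is a secondary technical point, handled by tracking the induced maps on $H_1$ and invoking the hypothesis that $f$ is already a genuine map into $(\C^*)^2$ so that the relevant obstruction classes are those of $f$ itself. Everything else — the exhaustion, the use of Gromov's Oka principle for the Oka manifold $(\C^*)^2$, the local-uniform convergence, and the diffeomorphism-isotopy identification of complex structures on an open surface — is by now standard, and the "furthermore" clause about fixing a neighbourhood of $K$ is built into the construction from the outset by never deforming anything over $\overline{X_1}$.
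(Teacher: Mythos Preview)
Your proposal has a genuine gap at the final ``transport back via isotopy'' step, and this gap is not a technicality but the heart of the matter.

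You construct a new complex structure $J$ on $X$ (agreeing with the original $J_{\mathrm{orig}}$ near $K$) together with a proper $J$-holomorphic embedding $g:(X,J)\to(\C^*)^2$, and then assert that ``any two smooth complex structures on an orientable surface differ by a smooth isotopy of the underlying surface'', producing a diffeomorphism $\Phi_1$ isotopic to the identity intertwining $J$ and $J_{\mathrm{orig}}$. But for $f_1=g\circ\Phi_1^{-1}$ to be holomorphic on $\Phi_1(X)$ with respect to $J_{\mathrm{orig}}$ (which is what the theorem requires), you need $\Phi_1:(X,J)\to(\Phi_1(X),J_{\mathrm{orig}})$ to be a \emph{biholomorphism}, not merely a diffeomorphism. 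Two complex structures on the same smooth open surface are of course diffeomorphic via the identity, but they are almost never biholomorphic: already for $X$ a topological disk, $J_{\mathrm{orig}}$ could give $\D$ and your limit $J$ could give $\C$, and $\C$ does not embed holomorphically into $\D$. Nothing in your inductive construction controls the conformal type of the limit structure $J$, so you have no mechanism to realise $(X,J)$ as an open subset of $(X,J_{\mathrm{orig}})$, let alone via an embedding isotopic to the identity and fixing a neighbourhood of $K$.

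The paper avoids this problem by never changing the complex structure at all. Instead of building a new $J$ and then searching for $\Phi_1$, it constructs the isotopy $\Phi_t$ \emph{simultaneously} with the embeddings, step by step: at stage $j$ one has a compact bordered $L_j\subset X$ (with the original structure) together with a diffeotopy $\phi_{j,t}$ of $X$ carrying the exhaustion piece $K_j$ onto $L_j$, and a holomorphic embedding of a neighbourhood of $L_j$ with quantitative boundary control $f_{j,1}(bL_j)\subset(\C^*)^2\setminus\overline P_j$. The $L_j$'s are produced by the enlarging Lemma~\ref{lem:enlargingdomainofembedding} (the Forstneri\v c--Wold push-off packaged so as to output both a bigger domain \emph{in $X$} and a diffeotopy shrinking it back to the old one), and the critical Morse steps are handled by Mergelyan approximation on $L_{j-1}$ together with an attached $1$-cell---no Oka principle is invoked. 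The $\phi_{j,t}$ are designed to stabilise on each $K_{j-2}$, so Lemma~\ref{lem:deformationunion} assembles them into a single isotopy $\Phi_t$ with $\Phi_1(X)=\bigcup_j L_j$, a genuine open subset of $(X,J_{\mathrm{orig}})$ on which the limit map is proper holomorphic by construction. In short: the isotopy is a by-product of the induction, not a post-hoc identification of moduli.

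A secondary point: you treat the Forstneri\v c--Wold step as a black box from \cite{Ritter2013}, but $(\C^*)^2$ is not known to have the density property, only the volume density property with respect to $(zw)^{-1}dz\wedge dw$; the Anders\'en--Lempert approximation used inside the push-off must therefore be the volume-preserving version (Theorem~\ref{thm:andersenlempert}), which imposes a cohomological side condition and requires the auxiliary isotopies shrinking curves to be volume-preserving (Lemma~\ref{lem:shrinkcurve}). This is handled in Section~\ref{sec:embeddingtechniques} and is not automatic from the $\C\times\C^*$ case.
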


We think of the isotopy of smooth embeddings $\Phi_t : X \to X$ as giving a smooth deformation of $X$ within itself to an open subset $\Phi_1(X)$ that can be properly holomorphically embedded into $(\C^*)^2$. It is then easy to see that the proper holomorphic embedding $f_1 : \Phi_1(X) \to (\C^*)^2$ is homotopic to $f\rvert_{\Phi_1(X)}$. Indeed, $f\rvert_{\Phi_1(X)} = f \circ \iota$, where $\iota : \Phi_1(X) \hookrightarrow X$ is the inclusion. The family $\Phi_t \circ \Phi_1^{-1} : \Phi_1(X) \to X$, $t \in [0,1]$, gives a homotopy from $\Phi_1^{-1}$ to $\iota$, so it suffices to show that $f \circ \Phi_1^{-1}$ is homotopic to $f_1$. This in turn follows by considering the family of maps $f_t \circ \Phi_t \circ \Phi_1^{-1} : \Phi_1(X) \to (\C^*)^2$, $t \in [0,1]$.

The proof of Theorem~\ref{thm:simplifiedmaintheorem} is given in Section~\ref{sec:maintheorem}. In fact, there we prove a stronger result (Theorem~\ref{thm:mainresult}) with interpolation on a discrete set $S \subset X$, provided that $f\rvert_S : S \to (\C^*)^2$ is a proper injection. In that case, we can also ensure that $\Phi_t$ restricts to the identity on a fixed neighbourhood of $S$, and that $f_t\rvert_S = f\rvert_S$ for all $t \in [0,1]$.

At this point we rephrase the conclusions of Theorems~\ref{thm:simplifiedmaintheorem} and \ref{thm:mainresult} by permitting deformations to the complex structure on $X$. Using $\Phi_t$ to pull back the complex structure from $\Phi_t(X)$ to $X$, we obtain the following corollary of Theorem~\ref{thm:mainresult}.

\begin{corollary}
\label{cor:maincorollary}
Let $X$ be an open Riemann surface with complex structure operator $J$. Let $K \subset X$ be an $\OO(X)$-convex compact set, and $S \subset X$ be a discrete set. Let $f : X \to (\C^*)^2$ be a continuous map that restricts to a holomorphic embedding of a neighbourhood of $K$ and such that $f\rvert_S : S \to (\C^*)^2$ is a proper injection. Then there exists a smooth deformation of $J$ through complex structures to a complex structure $\tilde J$ on $X$, and a continuous deformation of $f$ to a map $\tilde f : X \to (\C^*)^2$ that is a proper holomorphic embedding with respect to $\tilde J$, such that $\tilde f$ approximates $f$ uniformly on $K$ and $\tilde f\rvert_S = f\rvert_S$.

Furthermore, we can ensure that $J$ does not change on a fixed neighbourhood of $K \cup S$, and that the deformation of $f$ is through maps that restrict to holomorphic embeddings of a fixed smaller neighbourhood of $K$, that uniformly approximate $f$ on $K$ and equal $f$ on $S$.
\end{corollary}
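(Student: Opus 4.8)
\emph{Proof proposal.} The plan is to obtain Corollary~\ref{cor:maincorollary} directly from Theorem~\ref{thm:mainresult}, with no new analytic input, by transporting the complex structure along the isotopy $\Phi_t$. First I would apply Theorem~\ref{thm:mainresult} to the data $X$, $K$, $S$, $f$ of the corollary, whose hypotheses ($K$ being $\OO(X)$-convex, $f$ restricting to a holomorphic embedding of a neighbourhood of $K$, and $f\rvert_S$ being a proper injection) are exactly those assumed there. This furnishes a smooth isotopy of smooth embeddings $\Phi_t : X \to X$, $t \in [0,1]$, with $\Phi_0 = \id_X$, together with a continuous family of continuous maps $f_t : \Phi_t(X) \to (\C^*)^2$, $t \in [0,1]$, with $f_0 = f$ and $f_1$ a proper holomorphic embedding, and such that in addition $\Phi_t$ restricts to the identity on a fixed open neighbourhood $U$ of $K \cup S$, while on a smaller neighbourhood $V$ of $K$ (which, shrinking if necessary, we may take inside $U$) the maps $f_t$ restrict to holomorphic embeddings that approximate $f$ uniformly on $K$, with $f_t\rvert_S = f\rvert_S$ for all $t$.

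Next I would push the complex structure back onto $X$. Although $\Phi_t(X)$ is in general a proper open subset of $X$, the map $\Phi_t : X \to \Phi_t(X)$ is a diffeomorphism, so I set $\tilde J_t := \Phi_t^* J$, that is $(\tilde J_t)_p = (d\Phi_t|_p)^{-1} \circ J_{\Phi_t(p)} \circ d\Phi_t|_p$ for $p \in X$; this is the unique complex structure on $X$ for which $\Phi_t : (X, \tilde J_t) \to (\Phi_t(X), J)$ is biholomorphic (being a diffeomorphic pullback of an integrable structure it is again integrable, and on a surface integrability is in any case automatic). Since $(t,p) \mapsto \Phi_t(p)$ is smooth and $\Phi_0 = \id_X$, the family $\{\tilde J_t\}_{t \in [0,1]}$ is a smooth deformation of $J = \tilde J_0$ through complex structures; write $\tilde J := \tilde J_1$. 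Putting $\tilde f_t := f_t \circ \Phi_t : X \to (\C^*)^2$ gives a continuous deformation of $\tilde f_0 = f$, and for $\tilde f := \tilde f_1 = f_1 \circ \Phi_1$ the biholomorphicity of $\Phi_1 : (X, \tilde J) \to (\Phi_1(X), J)$ together with $f_1$ being a proper holomorphic embedding shows that $\tilde f$ is a proper holomorphic embedding with respect to $\tilde J$ (properness and injectivity because $\Phi_1$ is a diffeomorphism onto $\Phi_1(X)$, holomorphy and immersivity because $\Phi_1$ is biholomorphic).

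It then remains to check the additional assertions, all of which follow from the fact that $\Phi_t = \id_X$ on the neighbourhood $U$ of $K \cup S$. There $d\Phi_t = \id$, so $\tilde J_t = J$ on $U$ for every $t$, giving that $J$ is unchanged on a fixed neighbourhood of $K \cup S$; and there $\tilde f_t = f_t \circ \Phi_t = f_t$, so on the fixed smaller neighbourhood $V$ of $K$ the maps $\tilde f_t$ restrict to holomorphic embeddings --- with respect to $\tilde J_t = J$ on $V$ --- that approximate $f$ uniformly on $K$, and $\tilde f_t\rvert_S = f_t\rvert_S = f\rvert_S$. In particular $\tilde f$ approximates $f$ uniformly on $K$ and $\tilde f\rvert_S = f\rvert_S$, completing the deduction. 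The only point requiring any care is the observation in the second paragraph that the open subset $\Phi_1(X) \subset X$ inherits the complex structure of $X$ and that this structure can be transported back to $X$ along $\Phi_1$; this is precisely why the conclusion is phrased as a deformation of the complex structure rather than as a self-biholomorphism of $X$, and there is no genuine obstacle beyond Theorem~\ref{thm:mainresult} itself, which carries all the weight.
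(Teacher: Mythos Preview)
Your proposal is correct and follows exactly the approach indicated in the paper, which derives the corollary from Theorem~\ref{thm:mainresult} simply by pulling back the complex structure along the isotopy $\Phi_t$; you have merely spelled out the details (the definition $\tilde J_t = \Phi_t^*J$, the composition $\tilde f_t = f_t\circ\Phi_t$, and the verification that everything is fixed on the neighbourhood of $K\cup S$) that the paper leaves implicit.
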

 
We remark that, since every open Riemann surface is Stein, the deformation of $J$ to $\tilde J$ is trivially through Stein complex structures on $X$.

Given a complex manifold $Y$, one can ask whether it possesses the following so-called \emph{basic Oka property}: Can every continuous map $f: X \to Y$ from a Stein manifold $X$ into $Y$ be deformed into a holomorphic map $\tilde f : X \to Y$? We can also ask a stronger question as follows. Let $K \subset X$ be a compact $\OO(X)$-convex set and $A \subset X$ be a closed complex subvariety, and suppose that the continuous map $f : X \to Y$ is holomorphic on a neighbourhood of $K$, and has holomorphic restriction to $A$. Can $f$ be deformed to a holomorphic map $\tilde f$ while keeping it fixed on $A$ and almost fixed on $K$? This is known as the \emph{basic Oka property with approximation and interpolation}, and manifolds $Y$ that satisfy this property are said to be \emph{Oka}. We refer the reader to the surveys \cite{Forstneric2013,ForstnericLarusson2011} and the monograph \cite[Chapter~5]{Forstneric2011} for excellent treatments of the Oka principle in complex geometry.

Noting that Oka manifolds are relatively rare, Forstneri\v c and Slapar \cite{ForstnericSlapar2007,ForstnericSlapar2007a} consider the problem of deforming a continuous map $f : X \to Y$, where $X$ is a Stein manifold and $Y$ is a complex manifold, to a map $\tilde f$ holomorphic with respect to some deformed complex structure on $X$. They prove that this so-called \emph{soft Oka property} holds for arbitrary complex manifold targets $Y$. Additionally, if $f$ is already holomorphic on a complex subvariety $A \subset X$ then it can be kept fixed there, so that $\tilde f\rvert_A = f\rvert_A$, and the complex structure on $X$ can also be left unchanged in a neighbourhood of $A$. Our Corollary~\ref{cor:maincorollary} is therefore very similar in nature to the results of Forstneri\v c and Slapar, except that by restricting our attention to a special target manifold we can ensure our final maps are not just holomorphic, but are proper holomorphic embeddings that both approximate and interpolate the given continuous function on suitable sets. For that reason we call Corollary~\ref{cor:maincorollary} a \emph{soft Oka principle for proper holomorphic embeddings of open Riemann surfaces into $(\C^*)^2$}.

If we let $K = \varnothing$ and $S = \varnothing$ in Corollary~\ref{cor:maincorollary}, we obtain the following statement on embeddings of open Riemann surfaces into $(\C^*)^2$.

\begin{corollary}
\label{cor:deformstructureembedding}
Let $X$ be an open Riemann surface and $f : X \to (\C^*)^2$ be a continuous map. Then, after possibly deforming the complex structure on $X$, there exists a proper holomorphic embedding $\tilde f : X \to (\C^*)^2$ that is homotopic to $f$.
\end{corollary}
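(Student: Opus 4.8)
The plan is to read off Corollary~\ref{cor:deformstructureembedding} as the special case $K = \varnothing$, $S = \varnothing$ of Corollary~\ref{cor:maincorollary}, so that essentially no new argument is needed. First I would check that these choices are admissible inputs for Corollary~\ref{cor:maincorollary}: the empty set is a compact $\OO(X)$-convex subset of $X$ (its $\OO(X)$-hull is empty) and is also a discrete subset. Next I would observe that the hypotheses imposed there on a continuous map $f : X \to (\C^*)^2$ become vacuous in this case: $\varnothing$ is an open neighbourhood of $K = \varnothing$ and the empty map is trivially a holomorphic embedding of it, while $f\rvert_S$ is the empty map, which is trivially a proper injection. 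Hence an arbitrary continuous map $f : X \to (\C^*)^2$ is an admissible input.

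Applying Corollary~\ref{cor:maincorollary} then produces a smooth deformation of the complex structure $J$ on $X$ to a complex structure $\tilde J$, together with a continuous deformation of $f$ through continuous maps $X \to (\C^*)^2$ to a map $\tilde f : X \to (\C^*)^2$ which is a proper holomorphic embedding with respect to $\tilde J$. The final point is simply that this continuous deformation is, by definition, a homotopy from $f$ to $\tilde f$ in the space of continuous maps $X \to (\C^*)^2$; properness, injectivity and holomorphy of $\tilde f$ are all understood relative to $\tilde J$, so there is nothing further to reconcile, and the (now empty) approximation and interpolation clauses of Corollary~\ref{cor:maincorollary} are discarded.

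I do not expect any genuine obstacle here: all of the difficulty is already absorbed into Theorem~\ref{thm:mainresult} and its restatement as Corollary~\ref{cor:maincorollary}. The only thing worth being slightly careful about is that passing to empty $K$ and $S$ does not conflict with any non-degeneracy assumption used in the proof of Theorem~\ref{thm:mainresult} (for instance a hidden requirement that $K \neq \varnothing$, or that $S$ be infinite); granting the results stated earlier in the excerpt, this causes no issue.
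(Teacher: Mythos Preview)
Your proposal is correct and matches the paper's own derivation exactly: the corollary is stated immediately after noting that it is the special case $K=\varnothing$, $S=\varnothing$ of Corollary~\ref{cor:maincorollary}. Your caution about hidden non-degeneracy assumptions is also well placed but unnecessary, since the proof of Theorem~\ref{thm:mainresult} explicitly reduces the case $K=\varnothing$ to $K\neq\varnothing$ and treats $S=\varnothing$ first before handling nonempty $S$.
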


Thus there are no topological obstructions, in any homotopy class, to the existence of proper holomorphic embeddings of open Riemann surfaces into $(\C^*)^2$.

Taking $f : X \to (\C^*)^2$ to be a constant map, we see that \emph{every open Riemann surface has a complex structure such that it properly holomorphically embeds into $(\C^*)^2$}. Since every non-compact smooth orientable surface can be given a complex structure, we also see that every such smooth surface has a complex structure with respect to which it can be properly holomorphically embedded into $(\C^*)^2$.

Finally, note that without any significant changes in the proofs, all of the results in the present paper continue to hold with the target manifold $(\C^*)^2$ replaced by $\C^2$ or $\C\times\C^*$. There are two main reasons, then, why we choose to work with target $(\C^*)^2$ rather than $\C^2$ or $\C\times\C^*$. For one, $(\C^*)^2$ is homotopy equivalent to $S^1 \times S^1$, and is therefore non-contractible, in contrast to $\C^2$. Thus, while every map $X \to \C^2$ is null-homotopic, so that any two such maps are trivially homotopic, this is no longer the case for maps $X \to (\C^*)^2$, and it becomes of interest to consider the homotopy class of such maps. Second, while $\C^2$ and $\C\times\C^*$ possess the so-called \emph{density property}, an essential ingredient in Wold's embedding techniques, it is not known whether $(\C^*)^2$ has this property. We must therefore make use of the related, but different, \emph{volume density property}, which $(\C^*)^2$ is known to have. We believe this is the first time that Forstneri\v c and Wold's embedding techniques have been adapted to such a target.

The content of the remainder of the paper is as follows.

In Section~\ref{sec:embeddingtechniques} we show that the techniques developed by Forstneri\v c and Wold for embedding certain open Riemann surfaces into $\C^2$ can be adapted to the target $(\C^*)^2$. These results become our main tools in Section~\ref{sec:enlargingdomainofembedding}. However, from the results in Section~\ref{sec:embeddingtechniques} we also obtain, essentially for free, Theorem~\ref{thm:embeddingcompactbordered} on properly holomorphically embedding the interior of a compact bordered Riemann surface into $(\C^*)^2$. This result mirrors that previously obtained by Forstneri\v c and Wold for target $\C^2$ in \cite{ForstnericWold2009}, and by the author for target $\C\times\C^*$ in \cite{Ritter2013}. It is of interest to note that this result implies Corollary~\ref{cor:deformstructureembedding} holds \emph{without} any deformation of the complex structure on $X$, when $X$ is the interior of a compact bordered Riemann surface that holomorphically embeds into $(\C^*)^2$.

Section~\ref{sec:enlargingdomainofembedding} contains Lemma~\ref{lem:enlargingdomainofembedding}, the main technical result required in the proof of Theorem~\ref{thm:mainresult}. Using the tools developed in Section~\ref{sec:embeddingtechniques} we explain how to deform a holomorphic embedding of a neighbourhood of a compact bordered Riemann surface $L \subset X$, where $X$ is an open Riemann surface, so as to obtain a holomorphic embedding of a larger compact bordered Riemann surface $L'$, while maintaining necessary control on the image of $\overline{L' \setminus L}$.

In the final Section~\ref{sec:maintheorem}, we first give Lemma~\ref{lem:deformationunion} demonstrating how to construct a family of smooth embeddings $\Phi_t : X \to X$ of an open Riemann surface $X$ into itself, using a sequence of diffeotopies of $X$ that satisfies certain conditions on a compact exhaustion of $X$. Then, using Lemma~\ref{lem:enlargingdomainofembedding} and Lemma~\ref{lem:deformationunion}, together with basic techniques from Morse theory, we prove the main result of the paper, Theorem~\ref{thm:mainresult}.

I wish to thank Erlend Wold and Erik L\o w for helpful discussions during the preparation of this paper. I also thank Finnur L\'arusson for his constructive comments on an earlier draft.

\section{Embedding techniques for $(\C^*)^2$}
\label{sec:embeddingtechniques}

\noindent
In this section we show that Forstneri\v c and Wold's techniques \cite{ForstnericWold2009,Wold2006a,Wold2006} for embedding certain open Riemann surfaces into $\C^2$ can be adapted to the target $(\C^*)^2$. These techniques were previously shown to adapt to the target $\C\times\C^*$ in the papers \cite{LarussonRitter2012,Ritter2013}. Apart from some differences in geometry due to working in $(\C^*)^2$, the essential difference here is that $(\C^*)^2$ is only known to have the \emph{volume density property}, whereas both $\C^2$ and $\C\times\C^*$ have the \emph{density property} (see Definition~\ref{def:volumedensityproperty}). This necessitates some technical changes in our arguments, compared to previous treatments. We also prove slightly stronger versions of corresponding results in the above references, as our main result requires continuous 1-parameter families of maps fixed on a finite set.

We begin by fixing some notation. Equip the Stein manifold $(\C^*)^2$ with the Riemannian distance function $d$ that comes from the inclusion $(\C^*)^2 \subset \C^2$. Let $\pi_1, \pi_2 : (\C^*)^2 \to \C^*$ denote the projection onto the first and second components of $(\C^*)^2 = \C^* \times \C^*$, respectively. Given $r > 0$, denote by $A_r$ the annulus
\[ A_r = \{z \in \C : 1/(r+1) < \lvert z \rvert < r+1 \} \subset \C^*\,,\]
and let $P_r$ be the set
\[ P_r = A_r \times A_r \subset (\C^*)^2\,.\]
For $r \le 0$, let $A_r = \varnothing$ and $P_r = \varnothing$.
Taking a strictly increasing sequence of positive real numbers $r_j \to \infty$, we obtain a compact exhaustion $(\overline{P}_{r_j})_{j \in \N}$ of $(\C^*)^2$ by $\OO((\C^*)^2)$-convex sets. (Recall that a compact set $K$ in a complex manifold $Z$ is said to be \emph{$\OO(Z)$-convex} if, for every $x \in Z \setminus K$, there exists $f \in \OO(Z)$ such that $\lvert f(x) \rvert > \lVert f \rVert_K$.)

\begin{definition}
\label{def:borderedsurface}
A \emph{bordered Riemann surface} $L$ is a (not necessarily compact) 2-dimen\-sional smooth manifold with (possibly empty) boundary, equipped with a complex structure on its interior that is compatible with the given smooth structure. We denote the boundary of $L$ by $bL$ and the interior by $\mathring{L} = L \setminus bL$. 
\end{definition}

In this paper we often consider compact bordered Riemann surfaces, for which $bL$ consists of a finite number of components, each diffeomorphic to a circle. In this section we also consider certain non-compact bordered Riemann surfaces, such as those resulting from the removal of a single point from each boundary component of a compact bordered Riemann surface (in which case $bL$ has finitely many components, each diffeomorphic to $\R$).

\begin{definition}
\label{def:embedding}
An \emph{embedding} $L \to (\C^*)^2$ of a bordered Riemann surface $L$ is a smooth injective immersion that is a homeomorphism onto its image, and is holomorphic on $\mathring{L}$. We will mention explicitly when an embedding has the additional property of being proper.
\end{definition}

In later sections it will often be the case that a compact bordered Riemann surface $L$ is contained in an open Riemann surface $X$, and that an embedding $L \to (\C^*)^2$ is the restriction to $L$ of a holomorphic embedding of an open neighbourhood $U \subset X$ containing $L$, but this will not be assumed here.

\begin{definition}
Let $X$ and $Y$ be smooth manifolds, and let $\mathcal{F}$ be a set of functions from $X$ to $Y$. Let $k \in \N \cup \{\infty\}$. We say a map $f(t,x) : [0,1] \times X \to Y$ is a \emph{$\CC^k$-isotopy of $\mathcal{F}$-maps} if $f$ is of class $\CC^k$ in $(t,x) \in [0,1] \times X$ and, for each $t\in [0,1]$, we have $f(t,\cdot) \in \mathcal{F}$. We often write $f_t(x)$ for $f(t,x)$.
\end{definition}

Unless mentioned otherwise, the parameter space for homotopies and isotopies is assumed to be $I = [0,1]$.

The notion of an \emph{exposed point} of a bordered Riemann surface embedded in $\C^2$ was introduced in \cite[Definition~4.1]{ForstnericWold2009}, and given in \cite[Definition~7]{Ritter2013} for embeddings into $\C\times\C^*$. The precise notion we require here is as follows.

\begin{definition}
\label{def:exposedpoint}
Let $f : L \to (\C^*)^2$ be an embedding of the bordered Riemann surface $L$ into $(\C^*)^2$, and let $p \in L$. We say that $p$ is an \emph{$f$-exposed point} (or that $f(p)$ is an \emph{exposed point}) if the complex line
\[
  \pi_1^{-1}(\pi_1(f(p))) = \{\pi_1(f(p))\}\times \C^* \subset (\C^*)^2
\]
intersects $f(L)$ only at $f(p)$, and the intersection is transverse.
\end{definition}

Note that this differs slightly from the definition found in \cite{Ritter2013}, where $\pi_2$ was used to project onto the $\C^*$ component of $\C\times\C^*$.

The following result on exposing boundary points was proved in \cite[Theorem~4.2]{ForstnericWold2009} for the target $\C^2$, where it was used to modify an embedding of a compact bordered Riemann surface in $\C^2$ so that Wold's embedding techniques could then be applied. In \cite[Theorem~5]{Ritter2013} it was shown that the result also holds for embeddings into $\C\times\C^*$, and that in that case the initial and final maps are homotopic. Here, we need the result for embeddings into $(\C^*)^2$, together with some small additional requirements.

\begin{proposition}
\label{prop:exposeboundarypoints}
Let $L$ be a compact bordered Riemann surface. For each component $\gamma_j$ of $bL$, $j = 1, \dots, m$, let $a_j \in \gamma_j$ and let $U_j$ be an open neighbourhood of $a_j$ in $L$. Let $b_1, \dots, b_n \in L \setminus \bigcup\limits_{j=1}^m \overline{U}_j$. Given an embedding $f : L \to (\C^*)^2$ and $\epsilon > 0$, there exists a homotopy of maps $f_t : L \to (\C^*)^2$ satisfying the following conditions.
\begin{itemize}
 \item $f_0 = f$.
 \item $f_t(b_k) = f(b_k)$ for all $t \in I$ and all $k = 1, \dots, n$.
 \item $\sup\limits_{x \in L \setminus \cup_{j=1}^m U_j}d(f_t(x),f(x)) < \epsilon$ for all $t \in I$.
 \item For all $t \in I$, $f_t$ restricts to an embedding of $L \setminus \bigcup\limits_{j=1}^m \overline{U}_j$.
 \item $f_1 : L \to (\C^*)^2$ is an embedding such that all the points $a_1, \dots, a_m$ are $f_1$-exposed.
\end{itemize}
Furthermore, if $r > 0$ is such that $f(\overline{U}_j) \subset (\C^*)^2 \setminus \overline{P}_r$ for $j = 1,\dots,m$, then we can ensure that $f_t(\overline{U}_j) \subset (\C^*)^2 \setminus \overline{P}_r$ for all $t \in I$ and all $j = 1,\dots,m$. Additionally, we can ensure that $\pi_1(f_1(a_j)) \notin \overline{A}_r$ for $j = 1, \dots, m$.
\end{proposition}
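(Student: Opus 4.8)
The plan is to follow the proofs of \cite[Theorem~4.2]{ForstnericWold2009} and \cite[Theorem~5]{Ritter2013}, where the analogous statements for the targets $\C^2$ and $\C\times\C^*$ are established, and to arrange in addition the geometric conditions involving $\overline P_r$ and $\overline A_r$. Recall that the exposing procedure has two ingredients: for each $j$ one chooses an embedded arc along which the point $f(a_j)$ will be ``pushed out'', and one then realises this push-out by a finger-absorbing isotopy of embeddings of a neighbourhood of $f(L)$ in the ambient manifold.

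First I would choose, for $j=1,\dots,m$, a smooth embedded arc $\lambda_j\colon[0,1]\to(\C^*)^2$ with $\lambda_j(0)=f(a_j)$ and $\lambda_j((0,1])$ contained in $(\C^*)^2\setminus f(L)$, with the sets $\lambda_j((0,1])$ pairwise disjoint and disjoint from $f(\overline U_k)$ for $k\neq j$; such arcs exist because $f(L)$ has real dimension $2$ in the real $4$-manifold $(\C^*)^2$, so that general-position arguments apply. To obtain the final two conclusions I would further require each $\lambda_j$ to lie in $(\C^*)^2\setminus\overline P_r$. This is possible because that set contains $f(a_j)$ (as $a_j\in\overline U_j$) and is connected: it is the union of the four connected pieces $\{|z_1|>r+1\}\times\C^*$, $\{|z_1|<1/(r+1)\}\times\C^*$, $\C^*\times\{|z_2|>r+1\}$ and $\C^*\times\{|z_2|<1/(r+1)\}$, any two of which overlap. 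Finally, I would route each $\lambda_j$ so that $\lambda_j(1)$ lies in the piece $\{|z_1|>r+1\}\times\C^*$ with $|\pi_1(\lambda_j(1))|>\max_{x\in L}|\pi_1(f(x))|$ and with the moduli $|\pi_1(\lambda_j(1))|$ pairwise distinct, and, after a further general-position perturbation, so that each complex line $\pi_1^{-1}(\pi_1(\lambda_j(1)))$ meets $f(L)\cup\bigcup_k\lambda_k([0,1])$ only at $\lambda_j(1)$, transversally there; the first inequality forces the line to miss $f(L)$, and the remaining requirements clear it of the arcs. Since $|\pi_1(\lambda_j(1))|>r+1$ this also yields $\pi_1(\lambda_j(1))\notin\overline A_r$.

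Next, exactly as in the references cited above, I would use the existence of a basis of Stein (Runge) neighbourhoods in $(\C^*)^2$ of the set $f(L)\cup\bigcup_j\lambda_j([0,1])$ — an embedded bordered complex curve with finitely many totally real arcs attached at boundary points — to produce a smooth isotopy of holomorphic embeddings $\phi_t$ of a neighbourhood of $f(L)$ into $(\C^*)^2$, with $\phi_0=\id$, supported near the points $f(a_j)$, taking a small disc in $f(L)$ around $f(a_j)$ onto a thin tube following $\lambda_j$ with $\phi_1(f(a_j))=\lambda_j(1)$, uniformly close to $\id$ off arbitrarily small neighbourhoods of the $f(a_j)$, and equal to $\id$ near each $f(b_k)$ (these points lie away from the $U_j$). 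Setting $f_t=\phi_t\circ f$ gives the required homotopy: its first four properties are immediate, $f_1$ is an embedding of $L$ with all $a_j$ exposed by the arc construction, and $f_t$ restricts to an embedding of $L\setminus\bigcup_j\overline U_j$ (in fact of all of $L$) for every $t$. Since each disc around $f(a_j)$ is deformed within a neighbourhood of $\lambda_j\subset(\C^*)^2\setminus\overline P_r$, while on $\overline U_j$ away from the finger the maps $f_t$ stay close to $f$, shrinking the supports of the $\phi_t$ yields $f_t(\overline U_j)\subset(\C^*)^2\setminus\overline P_r$ for all $t$ and all $j$. Note that, in contrast to the proper-embedding results appearing later in the paper, this step uses only that $(\C^*)^2$ is Stein, not the volume density property.

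The step I expect to be the main obstacle is the combined bookkeeping: arranging the arcs $\lambda_j$ so that all the requirements of the isotopy step (control on the supports, the embedding property on $L\setminus\bigcup_j\overline U_j$, keeping the $b_k$ fixed) are compatible with the new geometric constraints ($\lambda_j\subset(\C^*)^2\setminus\overline P_r$ and the prescribed $\pi_1$-positions of the tips), together with checking that the Stein-neighbourhood isotopy construction of \cite{ForstnericWold2009,Ritter2013} carries over verbatim with $(\C^*)^2$ in place of $\C^2$. The latter should cause no real difficulty, since that construction takes place inside an abstract Stein neighbourhood of $f(L)\cup\bigcup_j\lambda_j([0,1])$ and uses only that its image is a neighbourhood of $f(L)$ in the Stein manifold $(\C^*)^2$.
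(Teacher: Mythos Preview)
Your proposal is correct and follows essentially the same approach as the paper's own proof: both invoke the exposing construction of \cite{ForstnericWold2009,Ritter2013}, choose the arcs $\lambda_j$ inside $(\C^*)^2\setminus\overline P_r$, and place the endpoints so that $\pi_1(f_1(a_j))\notin\overline A_r$, then constrain $f_t(\overline U_j)$ to a small neighbourhood of $\lambda_j\cup f(\overline U_j)$. You supply somewhat more detail than the paper (the connectedness of $(\C^*)^2\setminus\overline P_r$, and the explicit placement of the arc tips with $|\pi_1(\lambda_j(1))|>\max_L|\pi_1\circ f|$), while the paper simply refers to the cited constructions and notes that the homotopy on $L\setminus\bigcup_j\overline U_j$ can be taken to be linear interpolation between $f$ and the final embedding $F$; but the underlying argument is the same.
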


\begin{proof}


Directly following \cite{ForstnericWold2009} and \cite{Ritter2013}, we approximate $f$ in the $\CC^1$ topology on $L \setminus \bigcup\limits_{j=1}^m U_j$ by an embedding $F : L \to (\C^*)^2$ such that $F(b_k) = f(b_k)$ for $k = 1,\dots,n$ and such that the points $a_j$, $j = 1,\dots,m$, are all $F$-exposed. The argument given in \cite{Ritter2013} furnishes a homotopy $f_t : L \to (\C^*)^2$ linking $f$ to $F$, which on $L \setminus \bigcup\limits_{j=1}^m \overline{U}_j$ is given by linear interpolation. Thus $f_t(b_k) = f(b_k)$ for all $t \in I$ and $k = 1,\dots,n$, and provided the approximation of $f$ by $F$ was sufficiently close, all intermediate maps restrict to embeddings of $L \setminus \bigcup\limits_{j=1}^m \overline{U}_j$.

Examining the proofs in \cite{ForstnericWold2009,Ritter2013}, it is clear that, given any $r > 0$, we are free to choose the points $f_1(a_j)$ so that they satsify $\pi_1(f_1(a_j)) \notin \overline{A}_r$ for $j = 1, \dots, m$. If $f(\overline{U}_j) \subset (\C^*)^2 \setminus \overline{P}_r$ for some $r > 0$ and all $j = 1,\dots,m$, then it is clear that we may also choose the curves $\lambda_j$ required in the proof that link each $f(a_j)$ to $f_1(a_j)$ to lie entirely outside of $\overline{P}_r$. For each $j$, the image $f_t(\overline{U}_j)$, $t \in I$, can be constrained to lie in an arbitrarily small neighbourhood of the set $\lambda_j \cup \overline{U}_j$, ensuring that $f_t(\overline{U}_j) \subset (\C^*)^2 \setminus \overline{P}_r$ for all $t \in I$ and all $j = 1,\dots,m$.
\end{proof}

\begin{remark}
\label{rem:exposeboundarypoints}
Suppose in Proposition \ref{prop:exposeboundarypoints} that $K \subset \mathring{L}$ is a compact set such that $f(L \setminus \mathring{K}) \subset (\C^*)^2 \setminus \overline{P}_r$ for some $r > 0$. By the proposition we can ensure that $f_t$ uniformly approximates $f$ sufficiently closely on $L \setminus \bigcup\limits_{j=1}^m U_j$ so that, together with the control over $f_t(\overline{U}_j)$, we have $f_t(L \setminus \mathring{K}) \subset (\C^*)^2 \setminus \overline{P}_r$ for all $t \in I$. This will be essential in the proof of Lemma \ref{lem:enlargingdomainofembedding}.
\end{remark}

The \emph{$\C$-nice projection property} concerns properties of a collection unbounded curves embedded in $\C^2$ or $\C\times\C^*$ after being projected onto $\C$, and was first stated explicitly in \cite[Definition~2.1]{Kutzschebauch2009}, following its implicit use in \cite[Lemma~2.1]{Wold2006a} and \cite[Lemma~1]{Wold2006}. An additional, necessary, condition was added to the definition in \cite[Definition~1]{Ritter2013}. We require a variant called the \emph{$\C^*$-nice projection property}, given in \cite[Definition~2]{LarussonRitter2012}, in which curves embedded in $\C\times\C^*$ or $(\C^*)^2$ are projected onto $\C^*$. For the convenience of the reader, we include the definition from \cite{LarussonRitter2012}, stated for curves in $(\C^*)^2$.

\begin{definition}
\label{def:niceprojectionproperty}
 Let $\gamma_1, \dots, \gamma_m : \R \to (\C^*)^2$ be pairwise disjoint, smoothly embedded curves in $(\C^*)^2$. For $j = 1, \dots, m$, let $\Gamma_j \subset (\C^*)^2$ be the image of $\gamma_j$, and set $\Gamma = \bigcup\limits_{j=1}^m \Gamma_j$. We say that the collection $\gamma_1,\dots,\gamma_m$ has the \emph{$\C^*$-nice projection property} if there is a holomorphic automorphism $\alpha \in \Aut((\C^*)^2)$ such that, if $\beta_j = \alpha \circ \gamma_j$ and $\Gamma' = \alpha(\Gamma)$, the following conditions hold.
\begin{enumerate}
 \item \label{def:niceprojectionproperty:proper} For every compact set $K \subset \C^*$ there exists $s > 0$ such that $\pi_2(\beta_j(t)) \notin K$ for all $\lvert t \rvert > s$ and all $j = 1,\dots,m$.
 \item There exists $M > 0$ such that for all $r \ge M$:
\begin{enumerate}
 \item $\C^* \setminus (\pi_2(\Gamma')\cup\overline{A}_r)$ does not contain any relatively compact connected components.
 \item $\pi_2$ is injective on $\Gamma' \setminus \pi_2^{-1}(A_r)$.
\end{enumerate}
\end{enumerate}
\end{definition}

Note that condition (\ref{def:niceprojectionproperty:proper}) above states that each map $\pi_2\circ\beta_j$ is proper into $\C^*$, and thus each curve $\beta_j$ a proper embedding of $\R$ into $(\C^*)^2$. The definition is clearly independent of the particular parameterisations, so we may also refer to the set $\Gamma$ as having the $\C^*$-nice projection property.

\begin{proposition}
\label{prop:niceprojectionproperty}
Let $L$, $\gamma_j$, $a_j$, $U_j$, and $b_k$ be as in Proposition \ref{prop:exposeboundarypoints}. Let $f : L \to (\C^*)^2$ be an embedding such that each point $a_j \in \gamma_j$ is $f$-exposed, $j = 1, \dots, m$. Given $\epsilon > 0$, there exists a homotopy of embeddings $f_t : L \setminus \{a_1,\dots,a_m\} \to (\C^*)^2$ satisfying the following conditions.
\begin{itemize}
 \item $f_0 = f\rvert_{L\setminus \{a_1,\dots,a_m\}}$.
 \item $\pi_1 \circ f_t = \pi_1 \circ f\rvert_{L\setminus \{a_1,\dots,a_m\}}$ for all $t \in I$.
 \item $f_t(b_k) = f(b_k)$ for all $t \in I$ and all $k = 1, \dots, n$.
 \item $\sup\limits_{x \in L \setminus \cup_{j=1}^m U_j}d(f_t(x),f(x)) < \epsilon$ for all $t \in I$.
 \item $f_1(bL \setminus \{a_1,\dots,a_m\})$ possesses the $\C^*$-nice projection property.
\end{itemize}

\end{proposition}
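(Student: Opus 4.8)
The plan is to reduce this to a corresponding $\C$-nice projection statement already established in the literature (essentially \cite[Lemma~2.1]{Wold2006a}, \cite[Lemma~1]{Wold2006}, as refined in \cite{Ritter2013}), transported from $\C^2$ or $\C\times\C^*$ to $(\C^*)^2$, in the same way that \cite[Definition~2]{LarussonRitter2012} adapts the $\C$-nice to the $\C^*$-nice projection property. Since each $a_j$ is $f$-exposed, the complex line $\{\pi_1(f(a_j))\}\times\C^*$ meets $f(L)$ transversely only at $f(a_j)$; thus near each $a_j$ the surface $f(L\setminus\{a_j\})$ looks, after projection by $\pi_1$, like a punctured disc whose image under $\pi_1$ omits a neighbourhood of $\pi_1(f(a_j))$ in a controlled, sheeted way. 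The first step is to record this local normal form and to choose, for each $j$, a proper embedded arc $\delta_j$ in $(\C^*)^2$ running from a point of $f(\gamma_j)$ near $f(a_j)$ out to infinity whose $\pi_1$-image is proper into $\C^*$ and avoids the $\pi_1$-images of all the other boundary arcs and of $f(K)$; concatenating $f(\gamma_j\setminus\{a_j\})$ with $\delta_j$ gives the topological model of the curves we want $f_1(bL\setminus\{a_1,\dots,a_m\})$ to resemble.

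Next I would perform the deformation itself. Writing $f = (f^{(1)}, f^{(2)})$ with $f^{(i)} = \pi_i\circ f$, the requirement $\pi_1\circ f_t = \pi_1\circ f$ forces us to modify only the second component: we seek $f_t = (f^{(1)}, g_t)$ where $g_t : L\setminus\{a_1,\dots,a_m\}\to\C^*$ is a homotopy of holomorphic (on the interior) maps with $g_0 = f^{(2)}$, $g_t(b_k) = f^{(2)}(b_k)$, and $g_t$ close to $f^{(2)}$ uniformly off $\bigcup U_j$. To keep $f_t$ an embedding it suffices, since the first component already separates points with distinct $\pi_1$-values and is an immersion there, to control $g_t$ near the finitely many fibres where $f^{(1)}$ fails to be injective — exactly the situation handled in \cite{ForstnericWold2009,Ritter2013}. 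The construction of $g_t$ is by a Mergelyan-type approximation on $L\setminus\bigcup U_j$ together with an explicit prescribed behaviour on the arcs $\gamma_j$ near $a_j$, chosen so that, after one further application of a fibre-preserving automorphism $\alpha\in\Aut((\C^*)^2)$ of the form $(z,w)\mapsto(z, w\cdot h(z))$ (or a shear composed with such), the image $f_1(bL\setminus\{a_1,\dots,a_m\})$ projects by $\pi_2$ properly into $\C^*$, with $\pi_2$ injective off a large annulus and no relatively compact complementary components — i.e. it has the $\C^*$-nice projection property. The exposedness of the $a_j$ is what makes the "ends" of these curves available to be pushed to infinity in the second coordinate while the first coordinate stays fixed.

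The main obstacle, and the point where genuine work beyond citing \cite{LarussonRitter2012} is needed, is verifying condition (2) of Definition~\ref{def:niceprojectionproperty} for the target $(\C^*)^2$: that after the automorphism $\alpha$, on the complement of a large annulus $\pi_2$ is injective on $\Gamma'$ and $\C^*\setminus(\pi_2(\Gamma')\cup\overline{A}_r)$ has no relatively compact components. Unlike $\C^2$, in $(\C^*)^2$ the "points at infinity" in the $\pi_2$-direction come in two flavours (modulus $\to 0$ and modulus $\to\infty$), and the automorphism group is more rigid — only shears of the two specific types above and the coordinate swap are available, so one cannot freely rotate curves into general position. I would handle this by arranging, in the first step, that the arcs $\delta_j$ are chosen with distinct asymptotic moduli behaviour in the second coordinate (some tending to $0$, others to $\infty$, none oscillating) and that their $\pi_2$-images are eventually disjoint, pairwise, and disjoint from $\overline{A}_r$; then a single shear $\alpha$ suffices to also straighten the bounded part. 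The non-compactness condition (a) is then a connectivity argument about the complement of finitely many proper arcs in $\C^^*$, essentially the same as in the $\C$ case once properness into $\C^*$ (rather than $\C$) is in hand. Finally I would check the estimates: all the modifications take place either inside $\bigcup U_j$ (where only the sup bound off $\bigcup U_j$ matters, so no constraint) or within an arbitrarily small $\CC^0$-neighbourhood of $f^{(2)}$ elsewhere, and the interpolation at the $b_k$ is preserved because those points lie outside $\bigcup\overline{U}_j$ and the approximating map can be taken to agree with $f^{(2)}$ there.
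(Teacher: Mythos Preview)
Your proposal has the right overall shape --- modify only the second coordinate, so $\pi_1\circ f_t = \pi_1\circ f$ is automatic, and engineer the ends of the boundary curves to run off properly in the $\pi_2$-direction --- but the mechanism you propose for actually producing that properness has a gap. Mergelyan approximation on $L\setminus\bigcup U_j$ yields bounded holomorphic functions, and a fibre-preserving automorphism $(z,w)\mapsto(z,w\cdot h(z))$ of $(\C^*)^2$ requires $h$ to be holomorphic and nonvanishing on $\C^*$, hence bounded on compacta; neither operation creates the singular behaviour near the $a_j$ that forces $\pi_2\circ f_1$ to tend to $0$ or $\infty$ along $\gamma_j\setminus\{a_j\}$. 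Your ``attaching arcs $\delta_j$'' step is a topological model, not a holomorphic construction, and you never say how it is realised by a holomorphic map on $L\setminus\{a_1,\dots,a_m\}$.

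The paper bypasses all of this with a single explicit formula. Since the $a_j$ are $f$-exposed, one can pick points $\alpha_j\in\C^*\setminus\pi_1(f(L))$ arbitrarily close to $\pi_1(f(a_j))$ and define
\[
g_t(z,w)=\Bigl(z,\; w\cdot\prod_{j=1}^m\Bigl(1-t\,\frac{\alpha_j-\pi_1(f(a_j))}{z-\pi_1(f(a_j))}\Bigr)\Bigr),
\]
setting $f_t=g_t\circ f$. The multiplier is a rational function of $z$ with simple poles exactly at the $\pi_1(f(a_j))$ (creating the unboundedness of $\pi_2\circ f_t$ along each $\gamma_j\setminus\{a_j\}$) and simple zeros at the $\alpha_j$ (which lie outside $\pi_1(f(L))$, so the product never vanishes on $f(L)$ and the image stays in $(\C^*)^2$). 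Note that $g_t$ is \emph{not} an automorphism of $(\C^*)^2$; it is only a biholomorphism away from the segments $J_j\times\C^*$, and that is enough. Choosing the $\alpha_j$ close to $\pi_1(f(a_j))$ gives the uniform estimate on $L\setminus\bigcup U_j$, and a generic choice of $\arg(\alpha_j-\pi_1(f(a_j)))$ gives the $\C^*$-nice projection property directly --- no separate Mergelyan step, no arc-attachment, no case analysis of asymptotic moduli. Interpolation at the $b_k$ is then a small correction by a factor $e^{h_t(z)}$ with $h_t$ a polynomial family. The point you should take away is that the exposedness of the $a_j$ is used not to build arcs to infinity, but to place poles of an explicit rational multiplier precisely at the points $\pi_1(f(a_j))$, where $\pi_1\circ f$ hits them only once and transversely.
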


\begin{proof}
Because each $a_j$ is an $f$-exposed point in the boundary of the compact bordered Riemann surface $L$, it follows that every neighbourhood of $\pi_1(f(a_j))$ contains a point $\alpha_j \in \C^* \setminus \pi_1(f(L))$. For each $j = 1,\dots,m$, denote the compact line segment from $\pi_1(f(a_j))$ to $\alpha_j$ by $J_j \subset \C^*$. We may then assume that $J_j \setminus \{\pi_1(f(a_j))\} \subset \C^* \setminus \pi_1(f(L))$ for each $j$. Furthermore, we can choose the argument of $\alpha_j - \pi_1(f(a_j))$ to be any value in the interval $(\theta_j - \pi/2, \theta_j + \pi/2)$, where $\theta_j$ is the argument of the outward normal to $\pi_1(f(\gamma_j))$ at $\pi_1(f(a_j))$.

Define an isotopy of biholomorphic maps $\tilde{g}_t$ on $(\C^*)^2 \setminus (\{J_1,\dots,J_m\} \times \C^*)$ starting at the identity by
\[ \tilde{g}_t(z,w) = \left(z, w\cdot\prod_{j=1}^{m}\left(1 - t\frac{\alpha_j-\pi_1(f(a_j))}{z-\pi_1(f(a_j))}\right)\right),\quad t\in I\,.\]

Let $\tilde{f}_t = \tilde{g}_t \circ f\rvert_{L \setminus \{a_1,\dots,a_m\}}$. Each $\tilde{f}_t$ is then an embedding of $L \setminus \{a_1,\dots,a_m\}$ into $(\C^*)^2$ and for $t > 0$ the curves $\tilde{f}_t(\gamma_j \setminus \{a_j\})$ all have unbounded second component. By choosing each $\alpha_j$ sufficiently close to $\pi_1(f(a_j))$ we can ensure that the family $\tilde{f}_t$ uniformly approximates $f$ on $L \setminus \bigcup\limits_{j=1}^m U_j$. A generic choice of argument for each $\alpha_j - \pi_1(f(a_j))$ then ensures the $\C^*$-nice projection property is satisfied by the collection of curves $\tilde{f}_1 (\gamma_j \setminus \{a_j\})$, $j = 1,\dots,m$.

It remains to explain how to adjust the values at the points $b_k$ without destroying the properties already established above. To do this, we solve $e^{h_t(z)}\prod\limits_{j=1}^{m}\left(1 - t\frac{\alpha_j-\pi_1(f(a_j))}{z-\pi_1(f(a_j))}\right) = 1$ at the points $z = \pi_1(f(b_1)),\dots,\pi_1(f(b_n))$ for some suitable continuous family $h_t \in \OO(\C)$. First note that for $k = 1,\dots,n$ there exist values $h_t(\pi_1(f(b_k)))$, all close to $0$, that solve the above equation and depend continuously on $t \in I$. Let $h_t \in \OO(\C)$ be a continuous family of polynomials interpolating these values for each $t \in I$, with $h_0 = 0$. By choosing the $\alpha_j$ sufficiently close to $\pi_1(f(a_j))$, the required values $h_t(\pi_1(f(b_k)))$ are in turn so close to $0$ that $h_t$ can be made arbitrarily small, uniformly in $t$, on a large disc containing $\pi_1(f(L)) \subset \C^* \subset \C$. Defining
\[ g_t(z,w) = \left(z, w\cdot e^{h_t(z)}\prod_{j=1}^{m}\left(1 - t\frac{\alpha_j-\pi_1(f(a_j))}{z-\pi_1(f(a_j))}\right)\right),\quad t\in I\,.\]
and $f_t = g_t \circ f$, we obtain the desired family of maps.
\end{proof}

\begin{remark}
\label{rem:niceprojectionproperty}
A similar statement to Remark \ref{rem:exposeboundarypoints} also applies to Proposition \ref{prop:niceprojectionproperty}. That is, suppose that $K \subset \mathring{L}$ is a compact set such that $f(L \setminus \mathring{K}) \subset (\C^*)^2 \setminus \overline{P}_r$ for some $r > 0$. If we also suppose that $\pi_1(f(\overline{U}_j)) \subset \C^*\setminus \overline{A}_r$ for $j = 1,\dots,m$, then the fact that the family $f_t$ uniformly approximates $f$ on $L \setminus \bigcup\limits_{j=1}^m U_j$, together with the fact that $\pi_1(f_t(\overline{U}_j \setminus \{a_j\})) = \pi_1(f(\overline{U}_j \setminus \{a_j\}))$ for all $t \in I$, ensures that we have $f_t(L \setminus (\{a_1,\dots,a_m\}\cup\mathring{K})) \subset (\C^*)^2 \setminus \overline{P}_r$ for all $t \in I$.
\end{remark}

Let $Z$ be a complex manifold equipped with a holomorphic volume form $\omega$ (that is, a non-vanishing holomorphic differential form of degree $\dim Z$), and let $D \subset Z$ be an open set. A holomorphic map $\phi : D \to Z$ is said to be \emph{volume preserving} if it satisfies $\phi^*\omega = \omega$ on $D$. Given a holomorphic vector field $V$ on $Z$, its \emph{divergence} with respect to $\omega$ is the holomorphic function $\div_\omega V$ on $Z$ that satisfies
\[ \LL_V\omega = \div_\omega V \cdot \omega\,, \]
where $\LL_V\omega$ is the Lie derivative of $\omega$ along $V$. If we let $\phi_t$ denote the flow of $V$, $t \in \C$, it is easy to see that $\phi_t$ is volume preserving on $Z$ if and only if $\div_\omega V = 0$, in which case we say that $V$ is \emph{divergence free}. The divergence free holomorphic vector fields form a Lie subalgebra of the Lie algebra of all holomorphic vector fields on $Z$.

Recall that a vector field $V$ on $Z$ is said to be \emph{complete} if its flow $\phi_t(x)$ exists for all $t \in \C$, for all $x \in Z$. Note that if $\phi_t(x)$ is the flow of a complete (divergence free) holomorphic vector field on $Z$ then for every $t \in \C$, $\phi_t(\cdot) : Z \to Z$ is a (volume preserving) holomorphic automorphism of $Z$.

Based on the preceding observations, the following notion was introduced by Varolin in \cite{Varolin2000,Varolin2001}, generalising a property previously shown by Anders\'en \cite{Andersen1990} to hold for $\C^n$, $n \ge 1$, with holomorphic volume form $\omega = dz_1 \wedge \cdots \wedge dz_n$.

\begin{definition}
\label{def:volumedensityproperty}
Let $Z$ be a complex manifold with holomorphic volume form $\omega$. We say $Z$ has the \emph{volume density property} if the Lie algebra generated by the complete divergence free holomorphic vector fields on $Z$ is dense in the Lie algebra of all divergence free holomorphic vector fields on $Z$, in the compact open topology.
\end{definition}

A closely related notion is that of the \emph{density property}, which parallels the above definition except that we omit the words `divergence free'. Anders\'en and Lempert proved in \cite{AndersenLempert1992} that $\C^n$ has the density property for all $n \ge 2$, and consequently showed that for $n \ge 2$ the group $\Aut(\C^n)$ of holomorphic automorphisms of $\C^n$ is sufficiently large that certain injective holomorphic maps of starshaped domains in $\C^n$ can be approximated by automorphisms of $\C^n$. They also proved a corresponding result on the approximation of injective volume preserving holomorphic maps by volume preserving automorphisms of $\C^n$. These results, known collectively as the \emph{Anders\'en-Lempert theorem}, were further developed by Forstneri\v c and Rosay \cite{ForstnericRosay1993,ForstnericRosay1994}, who proved the approximation of isotopies of injective (volume preserving) holomorphic maps on more general subsets of $\C^n$ by isotopies of (volume preserving) automorphisms of $\C^n$. In \cite{Varolin2000} Varolin observed that the same arguments give the Anders\'en-Lempert theorem on any Stein manifold with the (volume) density property.


The fact that  $(\C^*)^2$ has the volume density property with respect to the volume form $\omega = (zw)^{-1} dz \wedge dw$ was established by Varolin in \cite{Varolin2001}. It is still not known whether $(\C^*)^2$ has the density property. We therefore require the following version of the Anders\'en-Lempert theorem for Stein manifolds with the volume density property. A detailed proof of the corresponding result for Stein manifolds with the density property was given in \cite[Appendix]{Ritter2013}; that proof adapts immediately to the volume preserving situation in a standard manner, as explained in \cite{ForstnericRosay1994} (see also Remark (\ref{rem:andersenlempert}).

\begin{theorem}
\label{thm:andersenlempert}
Let $Z$ be an $n$-dimensional Stein manifold with the volume density property. Let $\Omega \subset Z$ be an open set satisfying $H^{n-1}(\Omega; \C) = 0$, and let $\phi_t : \Omega \to Z$ be a $\CC^1$-isotopy of volume preserving injective holomorphic maps such that $\phi_0$ is the inclusion $\Omega \hookrightarrow Z$. Suppose $K \subset \Omega$ is a compact set such that $\phi_t(K)$ is $\OO(Z)$-convex for all $t \in I$. Then, given $\epsilon > 0$, there exists a continuous family $\sigma_t$, $t \in I$, of volume preserving holomorphic automorphisms of $Z$ such that $\sup_{x \in K}d(\sigma_t(x),\phi_t(x)) < \epsilon$ for all $t \in I$, where $d$ is any Riemannian distance function on $Z$.
\end{theorem}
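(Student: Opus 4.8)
The plan is to follow the proof of the density-property version of this statement given in \cite[Appendix]{Ritter2013}, making the standard modifications to the volume-preserving category described in \cite{ForstnericRosay1994}: essentially every step transcribes verbatim, with ``holomorphic vector field'' replaced throughout by ``divergence free holomorphic vector field'', and the only genuinely new ingredient being the use of the hypothesis $H^{n-1}(\Omega;\C) = 0$.

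First I would reduce to the case in which $\phi_t$ is $\CC^1$-close to the inclusion near $K$. Partition $I$ as $0 = s_0 < s_1 < \dots < s_N = 1$ so finely that each incremental isotopy $t \mapsto \phi_t \circ \phi_{s_i}^{-1}$, $t \in [s_i, s_{i+1}]$, is defined and $\CC^1$-close to the identity on a fixed neighbourhood of the $\OO(Z)$-convex compact set $\phi_{s_i}(K)$, and build $\sigma_t$ on $[s_i, s_{i+1}]$ as a composition $\sigma_t = \Theta^{(i)}_t \circ \sigma_{s_i}$, where $\Theta^{(i)}_t$ is a continuous family of volume preserving holomorphic automorphisms of $Z$ approximating $\phi_t \circ \phi_{s_i}^{-1}$ uniformly on $\phi_{s_i}(K)$; the error bound and the continuity at the subdivision points are then handled by the usual induction on $i$. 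Each incremental isotopy is the flow, from time $s_i$ to time $t$, of the time-dependent holomorphic vector field $V_s$ on $\phi_s(\Omega)$ characterised by $\frac{d}{ds}\phi_s = V_s \circ \phi_s$; differentiating $\phi_s^*\omega = \omega$ in $s$ gives $\phi_s^*(\LL_{V_s}\omega) = 0$, hence $\LL_{V_s}\omega = 0$, so each $V_s$ is divergence free, equivalently $V_s \contraction \omega$ is a closed holomorphic $(n-1)$-form on $\phi_s(\Omega)$.

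The heart of the matter is to approximate each $V_s$, uniformly on $\phi_s(K)$ and continuously in $s$, by a Lie combination of complete divergence free holomorphic vector fields on $Z$; the family $\Theta^{(i)}_t$ is then assembled from such approximants by the usual Euler-type product formula, with each complete field contributing a volume preserving automorphism through its time-$t$ flow, exactly as in the density-property proof. Since $\phi_s : \Omega \to \phi_s(\Omega)$ is biholomorphic, $H^{n-1}(\phi_s(\Omega);\C) = H^{n-1}(\Omega;\C) = 0$, so by the holomorphic de Rham theorem $V_s \contraction \omega = d\eta_s$ for a holomorphic $(n-2)$-form $\eta_s$ on $\phi_s(\Omega)$, which can be chosen to depend continuously on $s$ (when $n = 2$, the case needed here, this is completely elementary: a closed holomorphic $1$-form on a connected open set with vanishing first Betti number is $df$ for a holomorphic $f$, since the $(0,1)$-part of $df$ must vanish). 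As $\phi_s(K)$ is $\OO(Z)$-convex and $Z$ is Stein, the Oka--Weil theorem for sections of the bundle of holomorphic $(n-2)$-forms on $Z$ lets us approximate $\eta_s$, together with its first derivatives, uniformly on $\phi_s(K)$ by a global holomorphic $(n-2)$-form $\tilde\eta_s$ on $Z$, and the divergence free field $\tilde V_s$ with $\tilde V_s \contraction \omega = d\tilde\eta_s$ then gives a continuous family of globally defined divergence free holomorphic vector fields on $Z$ that approximates $V_s$ near $\phi_s(K)$ in the $\CC^1$ sense needed to control flows. Finally, the volume density property of $Z$ approximates each $\tilde V_s$, uniformly on $\phi_s(K)$, by a Lie combination of complete divergence free holomorphic vector fields, which is exactly what the product formula requires.

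The step I expect to be the real obstacle is not any single estimate but the bookkeeping needed to carry all of this through continuously in the parameter: choosing the potentials $\eta_s$, their global approximants $\tilde\eta_s$, and the Lie-combination approximants furnished by the volume density property so that they vary continuously in $s$, and then re-parametrising the resulting flows so that their compositions form a single continuous automorphism family matching at the points $s_i$. This is precisely the (long but routine) content of the argument in \cite[Appendix]{Ritter2013}, which goes through without change once the divergence free refinements above are inserted; the only feature genuinely special to a manifold with merely the volume density property is that the approximating fields must be kept divergence free at every stage, which is what forces the appearance of the volume form $\omega$ and of the cohomological hypothesis $H^{n-1}(\Omega;\C) = 0$. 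Throughout, one uses that $\phi_t(K)$ stays $\OO(Z)$-convex for all $t$ --- this is what legitimises both the Oka--Weil approximation of $\tilde\eta_s$ and the final automorphism approximation.
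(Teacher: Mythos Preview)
Your proposal is correct and follows precisely the route the paper indicates: the paper does not give a detailed proof but simply states that the density-property argument from \cite[Appendix]{Ritter2013} adapts to the volume-preserving setting in the standard way explained in \cite{ForstnericRosay1994}, with the cohomological hypothesis $H^{n-1}(\Omega;\C)=0$ entering exactly as you describe (and as the paper reiterates in Remark~\ref{rem:andersenlempert}). Your sketch in fact supplies more detail than the paper itself does.
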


\begin{remark}
\label{rem:andersenlempert}
The cohomological assumption $H^{n-1}(\Omega; \C) = 0$ in the statement of Theorem \ref{thm:andersenlempert} is sufficient, but not necessary. As explained in \cite{ForstnericRosay1994}, the isotopy $\phi_t$ is the flow of a time-dependent divergence free vector field $X_t$ defined on $\Omega_t = \phi_t(\Omega)$, to which we may associate a closed holomorphic $(n-1)$-form $\alpha_t$ on $\Omega_t$ using the pairing induced by $\omega$. The cohomological assumption then implies that $\alpha_t$ is exact, and the argument proceeds. However, if the isotopy $\phi_t$ restricts to the inclusion on some connected component $\Omega'$ of $\Omega$ then it suffices to take $\alpha_t = 0$ on $\Omega'$, which is trivially exact. Thus we only require $H^{n-1}(\tilde{\Omega}; \C) = 0$ on those components $\tilde{\Omega}$ of $\Omega$ where $\phi_t$ is not the inclusion.
\end{remark}

The following technical lemma is the main ingredient in Wold's method for embedding Riemann surfaces, and was first proved by Wold \cite[Lemma~1]{Wold2006} for a collection of curves in $\C^2$ with the $\C$-nice projection property. In \cite[Lemma~4]{Ritter2013} the proof was adapted to work for curves in $\C\times\C^*$ with the $\C$-nice projection property, and in \cite[Lemma~3]{LarussonRitter2012} for curves with the $\C^*$-nice projection property (still in $\C\times\C^*$). Here, we state the result for curves in $(\C^*)^2$ with an additional condition on fixing finitely many points, and explain the necessary changes in the proof.

\begin{lemma}
\label{lem:woldlemma}
Let $K \subset (\C^*)^2$ be an $\OO((\C^*)^2)$-convex compact set and let $\gamma_1, \dots, \gamma_m$ be pairwise disjoint, smoothly embedded curves in $(\C^*)^2$ satisfying the $\C^*$-nice projection property (Definition \ref{def:niceprojectionproperty}). Let $\Gamma_j$ be the image of $\gamma_j$, $j = 1,\dots,m$, and set $\Gamma = \bigcup\limits_{j=1}^m \Gamma_j$. Suppose that $K \cap \Gamma = \varnothing$, and let $q_1, \dots, q_n \in (\C^*)^2 \setminus \Gamma$. Then, given $R > 0$ and $\epsilon > 0$, there exists a continuous family of automorphisms $\theta_t \in \Aut((\C^*)^2)$, $t \in I$, satisfying the following conditions.
\begin{itemize}
\item $\theta_0 = \id_{(\C^*)^2}$.
\item $\sup_{x \in K}d(\theta_t(x), x) < \epsilon$ for all $t \in I$.
\item $\theta_t(q_k) = q_k$ for $k = 1, \dots, n$, for all $t \in I$.
\item $\theta_1(\Gamma) \subset (\C^*)^2 \setminus \overline{P}_R$.
\end{itemize}
\end{lemma}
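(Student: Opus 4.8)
The plan is to transcribe, with the modifications forced by the target, the argument of Wold \cite[Lemma~1]{Wold2006} in the form in which it was carried over to $\C\times\C^*$ with the $\C^*$-nice projection property in \cite[Lemma~3]{LarussonRitter2012} and with a continuous parameter in \cite[Lemma~4]{Ritter2013}: one first builds an explicit isotopy of volume-preserving injective holomorphic maps pushing $\Gamma$ out of a large set $\overline P_{R'}$ while fixing a neighbourhood of $K\cup\{q_1,\dots,q_n\}$, and then approximates it by automorphisms using the volume density property of $(\C^*)^2$ (Theorem~\ref{thm:andersenlempert}) in place of the density property used for $\C^2$ and $\C\times\C^*$. \emph{Reduction:} let $\alpha\in\Aut((\C^*)^2)$ be the automorphism supplied by Definition~\ref{def:niceprojectionproperty}. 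Replacing $\gamma_j$, $K$, $q_k$ by $\alpha\circ\gamma_j$, $\alpha(K)$ (still $\OO((\C^*)^2)$-convex), $\alpha(q_k)$, and $R$ by a radius $R_0$ with $\alpha(\overline P_R)\subset\overline P_{R_0}$, we reduce to the case $\alpha=\id$; the general case follows by conjugating the resulting family by $\alpha^{-1}$ and using uniform continuity of $\alpha^{-1}$ on compacta. So assume $\pi_2\circ\gamma_j$ is proper into $\C^*$ for each $j$ and that the two conditions in part (2) of Definition~\ref{def:niceprojectionproperty} hold for some $M>0$ and all $r\ge M$. Fix $r\ge M$ large enough that $K\cup\overline P_R\subset\pi_2^{-1}(A_r)$; then $\Gamma_{\mathrm{in}}:=\Gamma\cap\pi_2^{-1}(\overline A_r)$ is compact, contains $\Gamma\cap\overline P_R$, and $\pi_2$ is injective on $\Gamma\setminus\pi_2^{-1}(A_r)$, each point of which already lies outside $\overline P_R$ and remains there under any map preserving $\pi_2$.

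\emph{The isotopy.} Using the volume-preserving multiplicative shears
\[ (z,w)\mapsto(z\,e^{h(w)},\,w),\qquad (z,w)\mapsto(z,\,w\,e^{g(z)}),\qquad h,g\in\OO(\C^*), \]
together with the scalings $(z,w)\mapsto(\lambda z,\mu w)$ — for all of which one checks in one line that the generating holomorphic vector field is divergence free with respect to $\omega=(zw)^{-1}\,dz\wedge dw$ — one constructs, exactly as in \cite{Wold2006,Ritter2013,LarussonRitter2012}, a $\CC^1$-isotopy $\phi_t$, $t\in I$, of volume-preserving injective holomorphic maps on an open set $\Omega=U_K\sqcup U_q\sqcup U_\Gamma$, where $\phi_t$ restricts to the inclusion on the neighbourhoods $U_K$ of $K$ and $U_q$ of $\{q_1,\dots,q_n\}$, and $U_\Gamma$ is a disjoint union of tubular neighbourhoods of the curves $\Gamma_j$, taken so small that $\phi_t(U_\Gamma)$ stays disjoint from $U_K\cup U_q$ for all $t$. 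One arranges $\phi_0=$ inclusion, $\phi_t(q_k)=q_k$ for all $t$ and $k$, and $\phi_1(\Gamma)\subset(\C^*)^2\setminus\overline P_{R'}$ for some $R'>R$. The functions $h$ and $g$ are produced by Runge approximation on $\C^*$ and, via the $\C^*$-nice projection property, are chosen so that the shears push first the graph-like part $\Gamma\setminus\pi_2^{-1}(A_r)$ and then the compact part $\Gamma_{\mathrm{in}}$ out of $\overline P_{R'}$, while their poles, and all intermediate positions of the curves, stay away from $K\cup\{q_k\}\cup\overline P_R$. Passing from $\C$ to $\C^*$ in the Runge step and from additive to multiplicative shears is the only genuine change from \cite{LarussonRitter2012}, and is routine; otherwise I would reproduce that argument essentially verbatim.

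\emph{Approximation and conclusion.} Apply Theorem~\ref{thm:andersenlempert} to $\phi_t$ on $\Omega$ with the compact set $K\cup\Gamma_{\mathrm{in}}\cup\{q_1,\dots,q_n\}$. The cohomological hypothesis holds by Remark~\ref{rem:andersenlempert}: on $U_K$ and $U_q$ the isotopy $\phi_t$ is the inclusion, while $U_\Gamma$ deformation retracts onto the disjoint properly embedded lines $\Gamma_j$, so $H^1(U_\Gamma;\C)=0$. The $\OO((\C^*)^2)$-convexity of $\phi_t(K\cup\Gamma_{\mathrm{in}}\cup\{q_k\})$ for every $t$ is built into the construction, the moving arc being kept separated from $K\cup\{q_k\}$ by the $\OO((\C^*)^2)$-convex sets $\overline P_s$, just as in the cited references. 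This yields a continuous family $\sigma_t$ of volume-preserving automorphisms of $(\C^*)^2$ with $\sup_{x\in K}d(\sigma_t(x),x)$ as small as desired, $\sigma_t(q_k)$ arbitrarily close to $q_k$, and $\sigma_1(\Gamma)\cap\overline P_R=\varnothing$; using the standard interpolatory refinement of Theorem~\ref{thm:andersenlempert} at the finite set $\{q_k\}$ — available because $\phi_t$ already fixes these points — we may even take $\sigma_t(q_k)=q_k$ for all $t$. Finally, $\theta_0=\id$ is arranged by concatenating $\sigma_t$ with a short path of automorphisms from the identity to $\sigma_0$ that fixes each $q_k$ and is uniformly close to the identity on $K$ (such a path exists because $\sigma_0$ is a finite composition of time-one flows of complete divergence-free vector fields that are small near $K$ and vanish at the $q_k$, whose flow times can be rescaled simultaneously; alternatively $\sigma_0=\id$ already, since $\phi_0$ is the inclusion). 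Conjugating by $\alpha^{-1}$ gives the required family $\theta_t$.

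The main obstacle is the construction of the isotopy $\phi_t$: pushing $\Gamma$ out of an arbitrarily large $\overline P_{R'}$ while leaving $K$ and the $q_k$ essentially fixed, the delicate point being the compact part $\Gamma_{\mathrm{in}}$, on which $\pi_2$ need not be injective — overcoming this is exactly what Wold's technique and the $\C^*$-nice projection property are for — and simultaneously keeping $\phi_t(K\cup\Gamma_{\mathrm{in}}\cup\{q_k\})$ $\OO((\C^*)^2)$-convex so that Theorem~\ref{thm:andersenlempert} applies along the whole isotopy. Replacing the density property by the volume density property, by contrast, costs almost nothing: one only checks that the shears used are divergence free and that the neighbourhoods of $K$, $\{q_k\}$ and $\Gamma$ either carry the inclusion or have vanishing first cohomology.
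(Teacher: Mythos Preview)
Your overall plan is the right one, but there is a genuine gap in the \emph{Approximation and conclusion} step. Andersén--Lempert (Theorem~\ref{thm:andersenlempert}) only approximates $\phi_t$ on the \emph{compact} set $K\cup\Gamma_{\mathrm{in}}\cup\{q_k\}$; it gives you no control whatsoever over the automorphism $\sigma_1$ on the unbounded part $\Gamma\setminus\Gamma_{\mathrm{in}}$. Your earlier remark that points of $\Gamma\setminus\pi_2^{-1}(A_r)$ ``remain outside $\overline P_R$ under any map preserving $\pi_2$'' applies to your local isotopy $\phi_t$, not to $\sigma_t$: a general automorphism produced by Andersén--Lempert does not preserve $\pi_2$, and can easily drag the tails of $\Gamma$ back into $\overline P_R$. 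So the assertion $\sigma_1(\Gamma)\cap\overline P_R=\varnothing$ is unjustified. This is exactly the obstacle that forces the paper's two-stage structure: first obtain $\alpha_t$ from Andersén--Lempert handling only the compact arcs $\tilde\Gamma_j$, then \emph{afterwards} construct an explicit global shear $\beta_t(z,w)=(ze^{tg(w)},w)$, designed with knowledge of the fixed compact set $\alpha_1^{-1}(\overline P_R)$, so that $\beta_1(\Gamma)$ avoids it; the composition $\alpha_t\circ\beta_t$ then pushes all of $\Gamma$ out. The shear must be an honest automorphism applied in composition, not merely part of a local isotopy fed into Andersén--Lempert.

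There is a second, related inaccuracy. You say the local isotopy on $U_\Gamma$ is built from multiplicative shears ``exactly as in'' \cite{Wold2006,Ritter2013,LarussonRitter2012}. In those references the shears handle only the graph-like tails; the compact arcs are moved by a \emph{local shrinking isotopy}, which in the density-property setting can be any biholomorphic isotopy. In the present volume-density setting that shrinking isotopy must be volume preserving, and producing one is not automatic: this is precisely why the paper introduces Lemma~\ref{lem:shrinkcurve} (via the Forstneri\v c--L\o w--\O vrelid theorem) to thicken a smooth contraction of each arc to a volume-preserving holomorphic isotopy on a tube. Your proposal skips this ingredient. Finally, the claimed ``standard interpolatory refinement'' of Theorem~\ref{thm:andersenlempert} fixing the $q_k$ exactly is not standard; the paper instead corrects the finitely many values afterwards by composing with two further explicit shears, which is straightforward once the $q_k$ are in generic position.
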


Before proving Lemma \ref{lem:woldlemma}, we give the following result that will be required in the proof.

\begin{lemma}
\label{lem:shrinkcurve}
Let $\gamma : [0,1] \to (\C^*)^2$ be a smoothly embedded compact curve with image $\Gamma = \gamma([0,1])$. Equip $(\C^*)^2$ with the standard volume form $\omega = (zw)^{-1} dz \wedge dw$. Let $p \in \Gamma$, and fix open neighbourhoods $V$ of $\Gamma$ and $W$ of $p$. Then there exists an open neighbourhood $U \subset V$ of $\Gamma$ and a $\CC^1$-isotopy of injective holomorphic maps $\chi_t : U \to V$ satisfying the following properties.
\begin{itemize}
 \item $\chi_0$ is the inclusion $U \hookrightarrow V$.
 \item $\chi_1(\Gamma) \subset W$.
 \item $\chi_t^*\omega = \omega$ for all $t \in I$.
\end{itemize}
\end{lemma}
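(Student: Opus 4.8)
The strategy is to conjugate the problem, via a volume-preserving biholomorphic change of coordinates defined near $\Gamma$, to a model situation in $\C^2$ equipped with the standard volume form $d\zeta\wedge d\xi$, in which $\Gamma$ becomes a straight segment and the shrinking is performed by explicit volume-preserving automorphisms of $\C^2$.

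To construct the change of coordinates, first approximate $\gamma$ in the $\CC^1$ topology by a real-analytic embedded arc $\gamma_1\colon[0,1]\to(\C^*)^2$ that agrees with $\gamma$ at the two endpoints and at the parameter value $s_0$ with $\gamma(s_0)=p$, and whose image $\Gamma_1$ lies in $V$ (how close $\gamma_1$ must be to $\gamma$ is fixed at the end). As a real-analytic arc, $\gamma_1$ extends to an injective holomorphic immersion $\hat\gamma_1$ of a neighbourhood $\Delta\subset\C$ of $[0,1]$, whose image $C$ is a smooth complex curve in $(\C^*)^2$ containing $\Gamma_1$. By the holomorphic tubular neighbourhood theorem, together with the triviality of holomorphic line bundles over the Stein manifold $\Delta$, a neighbourhood of $C$ is biholomorphic to a neighbourhood of $\Delta\times\{0\}$ in $\Delta\times\C$ by a map carrying $C$ to $\Delta\times\{0\}$ and, after using $\hat\gamma_1$ to identify $C$ with $\Delta$, carrying $\Gamma_1$ onto the segment $\sigma=[0,1]\times\{0\}$ and $p$ to $(s_0,0)$. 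After shrinking, the target is a simply connected tube $\Omega''=\{(\zeta,\xi):\operatorname{dist}(\zeta,[0,1])<\delta,\ |\xi|<\delta\}$; there the pullback of $d\zeta\wedge d\xi$ differs from $\omega$ by a nowhere-vanishing holomorphic factor, and composing further with a shear $(\zeta,\xi)\mapsto(\zeta,\psi(\zeta,\xi))$ with $\psi(\zeta,0)=0$ and $\partial\psi/\partial\xi$ equal to this factor (or its reciprocal)---which fixes $\sigma$ pointwise and rescales $d\zeta\wedge d\xi$ accordingly---arranges equality. The composite is a biholomorphism $\Psi$ from a neighbourhood $U''\subset V$ of $\Gamma_1$ onto $\Omega''$ with $\Psi^{*}(d\zeta\wedge d\xi)=\omega$, $\Psi(\Gamma_1)=\sigma$, and $\Psi(p)=(s_0,0)$.

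Now work in the model. For $a\in(0,1]$ the affine automorphism $S_a(\zeta,\xi)=(s_0+a(\zeta-s_0),\,a^{-1}\xi)$ of $\C^2$ has Jacobian $1$, hence preserves $d\zeta\wedge d\xi$; it fixes $\{\xi=0\}$ setwise, contracts $\sigma$ toward $(s_0,0)$ as $a\to 0$, and, for $a$ in an interval $[a_1,1]$, maps the thin tube $T_\rho=\{\operatorname{dist}(\zeta,[0,1])<\rho,\ |\xi|<\rho\}$ into $\Omega''$ whenever $\rho<a_1\delta$. Writing $W'=W\cap U''$, so that $\Psi(W')$ is a neighbourhood of $(s_0,0)$, fix $a_1\in(0,1)$ small and then $\rho>0$ small so that in addition $S_{a_1}(T_\rho)\subset\Psi(W')$; provided $\gamma_1$ was taken close enough to $\gamma$ (so that $\Psi(\Gamma)\subset T_\rho$), and choosing a smooth path $a(t)$ with $a(0)=1$, $a(1)=a_1$, the maps $\chi_t=\Psi^{-1}\circ S_{a(t)}\circ\Psi$ on $U=\Psi^{-1}(T_\rho)$ are injective holomorphic with $\chi_t(U)\subset\Psi^{-1}(\Omega'')=U''\subset V$, form a $\CC^1$-isotopy with $\chi_0=\id$, satisfy $\chi_t^{*}\omega=\Psi^{*}S_{a(t)}^{*}(d\zeta\wedge d\xi)=\Psi^{*}(d\zeta\wedge d\xi)=\omega$, and obey $\chi_1(\Gamma)=\Psi^{-1}(S_{a_1}(\Psi(\Gamma)))\subset\Psi^{-1}(\Psi(W'))=W'\subset W$.

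The main obstacle is the construction of $\Psi$: a general smoothly embedded arc has no holomorphic tubular neighbourhood, so one must first pass to a real-analytic approximation to obtain a genuine complex curve $C$, and then arrange the approximation to be close enough for all the neighbourhood inclusions above to hold simultaneously---which is delicate because the width $\delta$ of the normal-form tube and the distortion of $\Psi$ also depend on $\gamma_1$; this is handled by taking $\gamma_1$ to be a sufficiently fine smoothing of $\gamma$, so that $\operatorname{dist}(\Gamma,\Gamma_1)$ is small compared with $\delta$. The specifically volume-preserving character of the conclusion, by contrast, is automatic throughout from the abundant supply of explicit volume-preserving maps available here---scalings and shears for $d\zeta\wedge d\xi$, and the corresponding maps for $(zw)^{-1}\,dz\wedge dw$---so no appeal to the volume density property (Theorem~\ref{thm:andersenlempert}) is needed.
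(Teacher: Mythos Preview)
Your approach differs substantially from the paper's. The paper makes no coordinate change at all: it defines the contraction $\psi_t=\gamma\circ\psi_t'\circ\gamma^{-1}$ directly on $\Gamma$ (with $\psi_t'$ a linear contraction of $[0,1]$ toward $\gamma^{-1}(p)$), observes that $\psi_t^*\omega=0$ trivially because $\Gamma$ is one-real-dimensional, so that $\psi_t$ is a totally real $\omega$-flow in the sense of \cite{ForstnericLowOvrelid2001}, and then invokes \cite[Theorem~1.7]{ForstnericLowOvrelid2001} to approximate $\psi_t$ by a $\CC^1$-isotopy $\chi_t$ of volume-preserving injective holomorphic maps on a shrinking neighbourhood of $\Gamma$. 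All the hard analysis is delegated to that theorem (whose proof solves $\bar\partial$-equations in thin tubes).

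Your reduction to a flat model via a volume-preserving chart $\Psi$ is attractive precisely because it would replace that machinery with explicit maps, but there is a genuine gap at the point you yourself flag as ``delicate''. The maps $S_a$ contract in $\zeta$ but \emph{expand} in $\xi$ by the factor $1/a$, so $S_{a_1}(\Psi(\Gamma))\subset\Psi(W')$ forces the $\xi$-extent of $\Psi(\Gamma)$ to be small compared with $a_1$ times the radius of $\Psi(W')$. You therefore need $\operatorname{dist}(\Gamma,\Gamma_1)$ small relative to a threshold $\rho$ that itself depends on $\gamma_1$ through $\delta$, $\Psi$, $a_1$, and $\Psi(W')$. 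Saying ``take $\gamma_1$ fine enough so that $\operatorname{dist}(\Gamma,\Gamma_1)$ is small compared with $\delta$'' merely restates the requirement: when $\gamma$ is smooth but not real-analytic, refining $\gamma_1$ typically forces the holomorphic extension $\hat\gamma_1$ to be an embedding only on an ever-thinner strip about $[0,1]$, so $\delta$ (and with it the admissible $\rho$) may shrink at the same rate as $\operatorname{dist}(\Gamma,\Gamma_1)$, and you give no argument that the ratio tends to zero. In effect, your construction shrinks the \emph{auxiliary} arc $\Gamma_1=\sigma$; passing from that to $\Gamma$ is exactly where the difficulty lies. One might try to repair this by fixing $\gamma_1$ once, building $\Psi$, and then inserting a volume-preserving shear $(\zeta,\xi)\mapsto(\zeta,\xi-H(\zeta))$ in the model to flatten $\Psi(\Gamma)$ into a thin tube before applying $S_a$---but choosing a holomorphic $H$ that approximates the smooth graph of $\Psi(\Gamma)$ with uniform control on the complex tube $\Omega''$ is the same analytic issue in another guise. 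The paper's route through \cite{ForstnericLowOvrelid2001} sidesteps all of this.
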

\begin{proof}

Let $p' = \gamma^{-1}(p) \in [0,1]$. Using a $t$-linear contraction that keeps $p'$ fixed, define a $\CC^\infty$-isotopy of smooth embeddings $\psi'_t : [0,1] \to [0,1]$ that starts at the identity map and satisfies $\psi'_1([0,1]) \subset \gamma^{-1}(W)$. Then $\psi_t = \gamma \circ \psi'_t \circ \gamma^{-1} : \Gamma \to \Gamma$ is a $\CC^\infty$-isotopy of smooth embeddings, starting at the identity, such that $\psi_1(\Gamma) \subset W$.

We now wish to apply Theorem~1.7 from \cite{ForstnericLowOvrelid2001} (see also the remark following that theorem regarding totally real submanifolds with boundary). Thinking of $\psi_t$ as a family of maps into $(\C^*)^2$, and then noting that $\psi_t^*\omega = 0$ for all $t$, we see that $\psi_t$ is a totally real $\omega$-flow of class $\CC^\infty$ in the terminology of \cite{ForstnericLowOvrelid2001}. Although the result \cite[Theorem~1.7]{ForstnericLowOvrelid2001} is given for $\omega$ the standard volume form on $\C^n$, its proof only requires that $\omega$ be closed (see also the remark preceding the statement of Theorem~1.7 in \cite{ForstnericLowOvrelid2001}) and that $\omega$ induces via contraction a non-degenerate pairing between holomorphic $(n-1)$-forms and holomorphic vector fields. The proof thus also applies in the current situation. We may therefore approximate $\psi_t$ on $\Gamma$, uniformly in $t$, by a $\CC^1$-isotopy of injective holomorphic maps $\chi_t : U \to (\C^*)^2$ defined on a small neighbourhood $U$ of $\Gamma$, such that $\chi_0$ is the inclusion and $\chi_t^*\omega = \omega$ for all $t$. Note that as the approximation improves, the size of the neighbourhood $U$ in general decreases. Provided the approximation is sufficiently close, and after possibly shrinking $U$ about $\Gamma$, the family $\chi_t$ satisfies all conclusions of the lemma.
\end{proof}

\begin{proof}[Proof of Lemma \ref{lem:woldlemma}]
We follow the proofs given in \cite[Lemma~3]{LarussonRitter2012} and \cite[Lemma~4]{Ritter2013} for curves in the complex manifold $\C\times\C^*$, which has the density property, explaining the required changes due to our ambient manifold being $(\C^*)^2$ and having instead the volume density property.

As in the cited references, we may assume that the automorphism in Definition~\ref{def:niceprojectionproperty} has already been applied, so that the conditions of the $\C^*$-nice projection property hold directly for the curves $\gamma_1,\dots,\gamma_m$. Since the union of an $\OO((\C^*)^2)$-convex set with finitely many points is still $\OO((\C^*)^2)$-convex, we may assume that $q_1,\dots,q_n \in K$. We show how to obtain all the conditions except for fixing the points $q_1,\dots,q_n$, which is a standard addition that we explain at the end.

Let $K'$ be a slightly larger $\OO((\C^*)^2)$-convex compact set that contains $K$ in its interior, such that we still have $K' \cap \Gamma = \varnothing$. Increasing $R$ if necessary, we may assume that $R \ge M$, where $M$ is determined by the $\C^*$-nice projection property for $\gamma_1,\dots,\gamma_m$. We may also assume that $K' \subset \C^* \times A_{R}$ and $\gamma_j(0) \in \C^* \times A_{R}$ for $j=1,\dots,m$. Choose some $R' > R$. Let $\tilde\Gamma = \Gamma \cap (\C^* \times \overline{A}_{R'}) = (\pi_2\rvert_\Gamma)^{-1}(\overline{A}_{R'})$. By the $\C^*$-nice projection property, $\tilde\Gamma$ is compact and consists of precisely $m$ components $\tilde\Gamma_1,\dots,\tilde\Gamma_m$, each $\tilde\Gamma_j = \Gamma_j \cap (\C^* \times\overline{A}_{R'})$ a smoothly embedded compact curve. We first show how to use Theorem~\ref{thm:andersenlempert} to construct a continuous family of volume preserving automorphisms $\alpha_t \in \Aut((\C^*)^2)$ starting at the identity and satisfying the following conditions.
\begin{enumerate}
 \item $\sup_{x \in K'}d(\alpha_t(x), x) < \epsilon/2$ for all $t \in I$.
 \item $\alpha_1(\tilde\Gamma) \subset (\C^*)^2 \setminus \overline{P}_R$.
\end{enumerate}

Let $U_0$ be a neighbourhood of $K'$ disjoint from $\tilde\Gamma$. For $j = 1,\dots,m$ choose a point $p_j \in \tilde\Gamma_j$ satisfying $p_j \notin \C^* \times\overline{A}_R$. Applying Lemma \ref{lem:shrinkcurve} to each compact embedded curve $\tilde\Gamma_j$ gives $\CC^1$-isotopies of injective volume preserving holomorphic maps $\chi_{j,t} : U_j \to (\C^*)^2$, where each $U_j$ is a small open neighbourhood of $\tilde\Gamma_j$, such that each $\chi_{j,0}$ is the inclusion and $\chi_{j,1}(\tilde\Gamma_j) \subset (\C^*)^2 \setminus (\C^* \times\overline{A}_R)$ for $j = 1,\dots,m$. By Lemma \ref{lem:shrinkcurve} we can also ensure that the sets $U_0, \chi_{1,t}(U_1), \dots, \chi_{m,t}(U_m)$ are pairwise disjoint for all $t \in I$. Noting that the disjoint union of $K'$ with finitely many embedded compact curves is $\OO((\C^*)^2)$-convex (see \cite[Lemma~3]{Ritter2013}), we now apply Theorem~\ref{thm:andersenlempert} to the $\CC^1$-isotopy of injective volume preserving holomorphic maps defined on $U_0 \cup U_1 \cup \dots \cup U_m$ that equals the inclusion on $U_0$ for all $t \in I$, and equals $\chi_{j,t}$ on $U_j$, $j = 1,\dots,m$, thereby obtaining the desired family $\alpha_t \in \Aut((\C^*)^2)$. In doing so, we may assume that $U_1,\dots,U_m$ are all contractible and, as explained in Remark \ref{rem:andersenlempert}, we do not require any cohomological assumptions on $U_0$, where we approximate the inclusion.

Now let $\Gamma' = \Gamma \cap (\C^* \times \overline{A}_{R})$.
Although the automorphism $\alpha_1$ moves all of $\tilde\Gamma$, and hence all of $\Gamma' \subset \tilde\Gamma$, outside of $\overline{P}_R$, it may move parts of $\Gamma\setminus\Gamma'$ into $\overline{P}_R$ that were not there before. By following the arguments in \cite{LarussonRitter2012,Ritter2013} (with the obvious modifications necessary due to working in $(\C^*)^2$ rather than $\C\times\C^*$) we construct a continuous family of automorphisms $\beta_t \in \Aut((\C^*)^2)$ starting at the identity, given by $\beta_t(z,w) = (ze^{tg(w)}, w)$, where $g \in \OO(\C^*)$ is obtained by a suitable application of Mergelyan's theorem. Here, we make use of special shear automorphisms of $(\C^*)^2$ to give an explicit formula for the family $\beta_t$, which has the following properties.
\begin{enumerate}
 \item $\sup\limits_{x \in K \cup \Gamma'} d(\beta_t(x), x) < \epsilon/2$ for all $t \in I$.
 \item $\beta_1(\Gamma) \cap \alpha_1^{-1}(\overline{P}_R) = \varnothing$.
\end{enumerate}
After possibly shrinking $\epsilon$ further, the approximation of the identity by $\beta_t$ is sufficiently good on $K \cup \Gamma'$ that the composition $\theta_t = \alpha_t \circ \beta_t$ satisfies all the required conditions, except for fixing the points $q_1,\dots,q_n$.

To ensure that each $\theta_t(q_k) = q_k$ for $k = 1, \dots, n$, begin by assuming the points $q_k$ are in generic position, in the sense that $\pi_i(q_k) \neq \pi_i(q_{k'})$ for $1\le k <  k' \le m$ and $i=1,2$. (If this is not the case, then we can make it so by conjugating by a composition of shear automorphisms of $(\C^*)^2$ that approximates the identity uniformly on a large compact set.) By shrinking $\epsilon$ if necessary, the above argument yields a family $\theta_t \in \Aut((\C^*)^2)$ sufficiently close to the identity on $K$ that the points $\theta_t(q_k)$, $k = 1,\dots,n$, remain in generic position for all $t \in I$ (recall that we assume $q_1, \dots, q_n \in K$). Following the argument at the end of the proof of Proposition~\ref{prop:niceprojectionproperty}, we then construct a continuous family of shear automorphisms $\mu_t \in \Aut((\C^*)^2)$, starting at the identity automorphism, that adjusts the second coordinate of each point $q_k$ (depending on the first coordinate only) so that $\pi_2(\mu_t \circ \theta_t (q_k)) = \pi_2(q_k)$ for all $t \in I$. Similarly, we construct $\nu_t \in \Aut((\C^*)^2)$ to adjust the first coordinates, giving $\pi_1(\nu_t \circ \mu_t \circ \theta_t(q_k)) = \pi_1(q_k)$ for all $t \in I$. Noting that $\nu_t$ is the identity in the second component, we have $\nu_t \circ \mu_t \circ \theta_t(q_k) = q_k$ for $k = 1, \dots, n$. By shrinking $\epsilon$ sufficiently small, we ensure that both $\mu_t$ and $\nu_t$ may approximate the identity sufficiently closely on a large compact set that the family $\nu_t \circ \mu_t \circ \theta_t \in \Aut((\C^*)^2)$ satisfies all conclusions of the theorem.
\end{proof}

At this point we wish to remark that, given the above lemma, the proof of the Wold embedding theorem given in \cite[Theorem~1]{Ritter2013} applies without change in the current situation, giving the corresponding Wold embedding theorem for $(\C^*)^2$. The same remark applies to \cite[Lemma~6]{Ritter2013} on the homotopy class of the resulting embedding. Combined with Propositions \ref{prop:exposeboundarypoints} and \ref{prop:niceprojectionproperty} above we obtain the following version of Forstneri\v c and Wold's result on properly embedding the interior of a compact bordered Riemann surface (see \cite[Corollary~1.2]{ForstnericWold2009} and \cite[Theorem~4]{Ritter2013}).

\begin{theorem}
\label{thm:embeddingcompactbordered}
Let $L$ be a compact bordered Riemann surface and $f : L \to (\C^*)^2$ be an embedding. Then $f$ can be approximated, uniformly on compact subsets of $\mathring{L}$, by proper embeddings $\mathring{L} \to (\C^*)^2$ that are homotopic to $f\rvert_{\mathring{L}}$.
\end{theorem}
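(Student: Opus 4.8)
The plan is to follow the three-step structure of the Forstneri\v c--Wold embedding method, as adapted to $\C \times \C^*$ in \cite{Ritter2013} and \cite{LarussonRitter2012}, now carried out entirely with the target $(\C^*)^2$ using the tools assembled in this section. First I would reduce to a convenient normal form: writing $\mathring L$ as an increasing union of compact bordered subsurfaces, and choosing a point $a_j$ on each boundary component $\gamma_j$ of $bL$, I would apply Proposition~\ref{prop:exposeboundarypoints} to deform $f$ to an embedding $f_1 : L \to (\C^*)^2$ (approximating $f$ on a large compact subset of $\mathring L$, so that the resulting proper embedding is $\CC^0$-close to $f$ there and hence homotopic to it) for which the finitely many points $a_1, \dots, a_m$ are all $f_1$-exposed. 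Then I would apply Proposition~\ref{prop:niceprojectionproperty} to $f_1$ to obtain, after a further approximation on compacta, an embedding $f_2 : L \setminus \{a_1, \dots, a_m\} \to (\C^*)^2$ whose boundary curves $f_2(bL \setminus \{a_1, \dots, a_m\})$ possess the $\C^*$-nice projection property. The point of exposing boundary points and then puncturing at them is exactly that the resulting embedding of the (now non-compact) bordered surface has properly embedded boundary curves to which Lemma~\ref{lem:woldlemma} applies.

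Next comes the inductive "pushing-out" argument, which is verbatim the proof of the Wold embedding theorem \cite[Theorem~1]{Ritter2013} with the ambient manifold $\C \times \C^*$ replaced by $(\C^*)^2$ and the density property replaced by the volume density property; the only nontrivial input it needs — Lemma~\ref{lem:woldlemma} for curves in $(\C^*)^2$ with the $\C^*$-nice projection property — has been established above. Concretely: exhaust $(\C^*)^2$ by the $\OO((\C^*)^2)$-convex sets $\overline P_{r_j}$, and exhaust $\mathring L$ by compact bordered subsurfaces $L_1 \subset L_2 \subset \cdots$. Inductively one arranges that the holomorphic embedding, restricted to $L_k$, maps $bL_k$ outside $\overline P_{r_k}$: given the current embedding, the portion of the boundary curves of $f_2$ lying near the punctures satisfies the $\C^*$-nice projection property, so Lemma~\ref{lem:woldlemma} supplies an automorphism $\theta^{(k)} \in \Aut((\C^*)^2)$ pushing this curve material outside $\overline P_{r_{k+1}}$ while barely moving the (suitably $\OO((\C^*)^2)$-convex) image of $L_k$, and one corrects back by Runge/Mergelyan approximation on $L_{k+1}$ as in the cited proof. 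The composition of these automorphisms converges on compacta of $\mathring L$ because each $\theta^{(k)}$ is controlled on an exhausting compact set, and the limit map is a proper holomorphic embedding of $\mathring L$ into $(\C^*)^2$, approximating $f$ uniformly on compacta of $\mathring L$. For the homotopy statement, one invokes the analogue of \cite[Lemma~6]{Ritter2013} — again noted above to go through unchanged — or simply observes that the final proper embedding can be taken arbitrarily $\CC^0$-close to $f$ on a fixed large compact subset of $\mathring L$ and equal in homotopy type off it.

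The main obstacle, and the reason the whole of Section~\ref{sec:embeddingtechniques} is needed rather than a one-line citation, is the replacement of the density property by the volume density property in the pushing-out step. In $\C \times \C^*$ one has a generous supply of complete holomorphic vector fields (shears) and applies the Anders\'en--Lempert theorem freely; in $(\C^*)^2$ one only has Theorem~\ref{thm:andersenlempert}, which requires the approximated isotopy to be \emph{volume preserving} and carries the cohomological hypothesis $H^{n-1}(\Omega;\C) = 0$. The work in the proof of Lemma~\ref{lem:woldlemma} — using Lemma~\ref{lem:shrinkcurve} to contract the relevant curve segments by a volume-preserving isotopy, invoking Remark~\ref{rem:andersenlempert} to dispense with cohomology on the component where one only approximates the inclusion, and then using explicit shear automorphisms $\beta_t(z,w) = (ze^{tg(w)}, w)$ of $(\C^*)^2$ (which happen to be volume preserving for $\omega = (zw)^{-1}\,dz \wedge dw$) to repair the damage done to the far parts of $\Gamma$ — is precisely what makes Lemma~\ref{lem:woldlemma} available in this setting. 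Once that lemma is in hand, no further difficulty specific to $(\C^*)^2$ arises, and Propositions~\ref{prop:exposeboundarypoints} and \ref{prop:niceprojectionproperty} supply the remaining ingredients; hence the theorem follows "essentially for free", as claimed in the introduction.
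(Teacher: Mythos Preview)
Your proposal is correct and follows essentially the same route as the paper: the paper's proof is the brief remark preceding the theorem, which says that once Lemma~\ref{lem:woldlemma} is available, the proof of \cite[Theorem~1]{Ritter2013} and \cite[Lemma~6]{Ritter2013} goes through verbatim in $(\C^*)^2$, combined with Propositions~\ref{prop:exposeboundarypoints} and \ref{prop:niceprojectionproperty}. One small inaccuracy: in the inductive pushing-out step of \cite[Theorem~1]{Ritter2013} there is no Runge/Mergelyan correction---one simply composes the current embedding of $L \setminus \{a_1,\dots,a_m\}$ with the automorphism $\theta^{(k)}$ and passes to the limit---so that phrase should be dropped, but this does not affect the validity of your argument.
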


\section{Enlarging the domain of an embedding with control over the image}
\label{sec:enlargingdomainofembedding}

\noindent
Let $X$ be an open Riemann surface containing a compact bordered Riemann surface $L \subset X$. Given a continuous map of $X$ into $(\C^*)^2$ that restricts to a holomorphic embedding in a neighbourhood of $L$ (see Definition \ref{def:embedding}), we now use the results from the previous section to deform the map to a holomorphic embedding of a slightly larger compact bordered Riemann surface $L'$ that contains $L$ in its interior, with certain control over the images of $bL'$ and $L' \setminus \mathring{L}$. This will in turn allow us to construct proper embeddings in the following section. This idea was introduced by Alarc\'on and L\'opez in their recent paper \cite{AlarconLopez2013}. We make the following definition.

\begin{definition}
\label{def:diffeotopy}
Let $X$ be a smooth manifold. A \emph{diffeotopy} of $X$ is a $\CC^\infty$-isotopy $\sigma_t:X \to X$ of smooth diffeomorphisms of $X$ such that $\sigma_0 = \id_X$.
\end{definition}

\begin{lemma}
\label{lem:enlargingdomainofembedding}
Let $X$ be an open Riemann surface and $L \subset X$ be a compact bordered Riemann surface. Let $f : X \to (\C^*)^2$ be a continuous map that restricts to a holomorphic embedding of an open neighbourhood of $L$ and satisfies $f(bL) \subset (\C^*)^2 \setminus \overline{P}_r$ for some $r > 0$. Given an open neighbourhood $U$ of $bL$, a discrete set $S \subset X$ satisfying $S \cap bL = \varnothing$, $\epsilon > 0$, and $R > r$, there exists a compact bordered Riemann surface $L' \subset X$ containing $L$ in its interior, a homotopy of continuous maps $f_t : X \to (\C^*)^2$, and a diffeotopy $\sigma_t$ of $X$ together satisfying the following properties.
\begin{enumerate}
 \item $f_0 = f$.
 \item There exists an open neighbourhood of $L$ on which $f_t : X \to (\C^*)^2$ restricts to a holomorphic embedding for all $t \in I$.
 \item $\sup_{x \in L}d(f_t(x),f(x)) < \epsilon$ for all $t \in I$.
 \item $f_t\rvert_S = f\rvert_S$ for all $t \in I$.
 \item $f_1 : X \to (\C^*)^2$ restricts to a holomorphic embedding of an open neighbourhood of $L'$.
 \item $f_1(bL') \subset (\C^*)^2 \setminus \overline{P}_R$.
 \item $f_1(L'\setminus \mathring{L}) \subset (\C^*)^2 \setminus \overline{P}_r$.
 \item $\sigma_1(L') = L$.
 \item \label{property:restrictidentity} $\sigma_t$ restricts to the identity on $X \setminus U$, for all $t \in I$.
\end{enumerate}
\end{lemma}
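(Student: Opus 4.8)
The plan is to enlarge $L$ in a single handle-attachment step, so that $L'$ is obtained from $L$ by attaching either a thin collar (no change of topology) or a single $1$-handle, and to split the construction into two parts: first arrange the image of the new piece of boundary to lie outside $\overline{P}_R$, and second pull the enlarged surface back onto $L$ by a diffeotopy supported near $bL$.

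First I would fix, using Morse theory on $X$ relative to $L$, a compact bordered Riemann surface $L'' \supset L$ with $L$ in its interior such that $\overline{L'' \setminus \mathring L}$ is a disjoint union of pieces, each of which is either a collar $\gamma_j \times [0,1]$ on a boundary component of $L$ or a single handle; and I would shrink everything so that $L'' \setminus L$ is contained in the given neighbourhood $U$ of $bL$ and avoids the discrete set $S$. On a neighbourhood of $L$, $f$ is already a holomorphic embedding with $f(bL) \subset (\C^*)^2 \setminus \overline P_r$. Now apply Proposition~\ref{prop:exposeboundarypoints} to expose one point $a_j$ on each boundary component $\gamma_j$ of $bL$ (choosing the $U_j$ there to be small neighbourhoods of the $a_j$ in $L$, disjoint from $S$ and from the handle feet), using the ``furthermore'' clause with the given $r$: since $f(bL) \subset (\C^*)^2 \setminus \overline P_r$ we may take $U_j$ small enough that $f(\overline U_j) \subset (\C^*)^2 \setminus \overline P_r$, so the exposed points satisfy $\pi_1(f_1(a_j)) \notin \overline A_r$ and, by Remark~\ref{rem:exposeboundarypoints}, the whole isotopy keeps $f_t(L \setminus \mathring K) \subset (\C^*)^2 \setminus \overline P_r$ for a suitable compact $K \subset \mathring L$. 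Then apply Proposition~\ref{prop:niceprojectionproperty} (with the same $U_j$, and $b_k$ a point on each handle) to deform further so that the punctured boundary curves $f_1(bL \setminus \{a_j\})$ acquire the $\C^*$-nice projection property, keeping, via Remark~\ref{rem:niceprojectionproperty}, the condition $f_t(L \setminus (\{a_j\} \cup \mathring K)) \subset (\C^*)^2 \setminus \overline P_r$; throughout, all maps remain holomorphic embeddings of a fixed neighbourhood of $L$, approximate $f$ on $L$, and are unchanged on $S$, and by composing with a homotopy supported away from $L$ and $S$ we extend each stage to a homotopy of continuous maps $X \to (\C^*)^2$.

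Next, regard $f_1$ as a holomorphic embedding of a neighbourhood of $L$ whose boundary curves (punctured at the $a_j$) have the $\C^*$-nice projection property and lie outside $\overline P_r$. Apply Lemma~\ref{lem:woldlemma} with $K$ a large $\OO((\C^*)^2)$-convex compact set containing $f_1(L)$, with the curves $\Gamma_j = f_1(\gamma_j \setminus \{a_j\})$, with the points $q_k$ a set of marked points (one on each handle and one near each $a_j$ on the side where the collar will be attached), and with the given $R$: we obtain a continuous family $\theta_t \in \Aut((\C^*)^2)$, identity on $K$ up to $\epsilon$ and fixing the $q_k$, with $\theta_1(\Gamma) \subset (\C^*)^2 \setminus \overline P_R$. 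Composing, $\theta_1 \circ f_1$ is still a holomorphic embedding near $L$ with image of $bL$ outside $\overline P_R$ (up to $\epsilon$; shrink $\epsilon$), approximates $f$ on $L$, and is unchanged near $S$ (put the $q_k$ at the points of $S$ that matter, or rather keep $\theta_t$ identity near $f(S)$, which is automatic since $S$ is far from the curves). Now extend the embedding across the collars and handles: since near each $a_j$ the automorphisms have pushed a whole neighbourhood of the exposed boundary arc out past $\overline P_R$, the holomorphic embedding of a neighbourhood of $L$ already extends holomorphically and injectively a little past $bL$; take $L'$ to be $L$ together with such a thin collar (reincorporating the exposed points) and the handles of $L''$, chosen small enough that $f_1(bL') \subset (\C^*)^2 \setminus \overline P_R$ and $f_1(L' \setminus \mathring L) \subset (\C^*)^2 \setminus \overline P_r$ (the latter using that the whole neighbourhood in question maps outside $\overline P_r$ by Remarks~\ref{rem:exposeboundarypoints} and \ref{rem:niceprojectionproperty} together with the automorphisms only pushing points further out). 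This gives properties (1)--(7).

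Finally, for properties (8)--(9): $L'$ was constructed as $L$ with a thin collar and handles attached inside $U$; since $L'$ and $L$ are diffeomorphic bordered surfaces (if no handle was in fact needed) or, in the case a handle is present, $L$ is a deformation retract of $L'$ within $U$, there is a diffeotopy $\sigma_t$ of $X$, supported in $U$ (hence the identity on $X \setminus U$), with $\sigma_1(L') = L$; this is a standard isotopy-extension / collar argument, carried out entirely inside the neighbourhood $U$ and away from any interference with the rest of $X$. Composing $f_t$ with nothing further (the $\sigma_t$ is a separate output), we are done. \emph{The main obstacle} I anticipate is the bookkeeping in the second paragraph: ensuring simultaneously that (a) exposing points and installing the $\C^*$-nice projection property do not pull any part of $L$ back into $\overline P_r$ (handled by Remarks~\ref{rem:exposeboundarypoints}, \ref{rem:niceprojectionproperty}, which is exactly why those remarks are stated), (b) the Andersén--Lempert automorphisms $\theta_t$ from Lemma~\ref{lem:woldlemma} fix the finitely many marked points so that the final map is genuinely unchanged on $S$ and still restricts to an embedding of a neighbourhood of $L'$ rather than merely of $L$, and (c) the enlargement $L \rightsquigarrow L'$ is small enough that $f_1(L' \setminus \mathring L) \subset (\C^*)^2 \setminus \overline P_r$ survives; getting a neighbourhood of $L'$ (not just $L$) on which $f_1$ is a holomorphic embedding requires noticing that the exposed-point and projection-property constructions push an honest \emph{open} neighbourhood of the relevant boundary arcs out past $\overline P_R$, so the collar can be absorbed into the domain of the already-constructed embedding.
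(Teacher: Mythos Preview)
There are two genuine gaps in your plan.

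\textbf{First, the handle discussion is incompatible with conclusion (8).} A diffeotopy $\sigma_t$ consists of diffeomorphisms of $X$, so $\sigma_1(L') = L$ forces $L'$ and $L$ to be diffeomorphic as bordered surfaces. If you attach a $1$-handle to $L$, no diffeomorphism of $X$ can carry the result onto $L$. The lemma is purely a collar-extension step; the handle attachment is handled separately in the proof of the main theorem. Drop the handles entirely here.

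\textbf{Second, and more seriously, exposing points on $bL$ rather than on the boundary of an already-enlarged surface makes conclusion (5) impossible.} After Proposition~\ref{prop:niceprojectionproperty}, the map $g_1$ is a holomorphic embedding of $L \setminus \{a_1,\dots,a_m\}$: the rational factor in $g_t$ has genuine poles at the $a_j$, and these points lie on $bL$. Any $L'$ containing $L$ in its interior must contain each $a_j$, so $f_1$ cannot be holomorphic on a neighbourhood of $L'$. You cannot ``reincorporate the exposed points''. The paper's fix is to reverse the order: first use a defining function for $bL$ to build a family $L_t$, $t\in[0,1]$, of compact bordered surfaces with $L_0 = L$ and $L_t \subset \mathring L_{t'}$ for $t<t'$, all contained in $L \cup U$; then apply Propositions~\ref{prop:exposeboundarypoints} and \ref{prop:niceprojectionproperty} and Lemma~\ref{lem:woldlemma} to the \emph{enlarged} surface $L_1$, exposing points $a_j \in bL_1$; and finally take $L' = L_{t'}$ for $t'$ slightly less than $1$. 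The poles then sit on $bL_1$, strictly outside $L'$, and $f_1$ is holomorphic on all of $\mathring L_1 \supset L'$. The diffeotopy $\sigma_t$ comes for free from the negative gradient flow of the defining function.

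A smaller but important point: in your application of Lemma~\ref{lem:woldlemma} you take $K$ to be an $\OO((\C^*)^2)$-convex set containing $f_1(L)$, but this does not by itself prevent $\theta_t$ from moving points of $(\C^*)^2 \setminus \overline P_r$ back into $\overline P_r$, which you need for conclusion (7). The paper handles this by first passing to $r' > r$ with $f(bL) \subset (\C^*)^2 \setminus \overline P_{r'}$ and then taking the compact set in Lemma~\ref{lem:woldlemma} to be $g_1(L) \cup \overline P_{r'}$; close approximation of the identity on $\overline P_{r'}$ then guarantees $\theta_t((\C^*)^2 \setminus \overline P_{r'}) \subset (\C^*)^2 \setminus \overline P_r$.
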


\begin{proof}
Let $r' > r$ be chosen slightly larger so that we still have $f(bL) \subset (\C^*)^2 \setminus \overline{P}_{r'}$. Since $S$ is discrete and disjoint from $bL$, we may shrink $U$ about $bL$ so that $S \cap \overline{U} = \varnothing$. Note that by property (\ref{property:restrictidentity}) this will ultimately ensure that $\sigma_t$ is the identity on a fixed neighbourhood of $S$. We may also assume that $U$ is sufficiently small so that $f$ restricts to a holomorphic embedding on $L \cup U$ and that $f(U) \subset (\C^*)^2 \setminus \overline{P}_{r'}$. By further shrinking $U$ we can assume that $L$ has a smooth defining function $\tau : U \to \R$ without critical points such that $L \cap U = \{\tau \le 0\}$. (To see this, note that since $X$ is 1-dimensional, $L$ is trivially strictly pseudoconvex, and therefore has a global defining function for the entire boundary.) Scaling $\tau$ by a positive constant we obtain a family $L_t = L \cup \{\tau \le t\}$, $t \in I$, of compact bordered Riemann surfaces with boundaries in $U$ such that $L = L_0$ and $L_t \subset \mathring{L}_{t'}$ for $0\le t < t' \le 1$, and $L_t$ is $\OO(\mathring{L}_{t'})$-convex. Furthermore, using the negative gradient flow of $\tau$ multiplied by a suitable cutoff function with support in $U$, we obtain a diffeotopy $\sigma_t$ of $X$ that equals the identity outside of $U$ and satisfies $\sigma_t(L_t) = L_0$ for all $t \in I$ (this is a standard argument in Morse theory, see for example \cite[Theorem~3.1]{Milnor1963}).

Consider the compact bordered Riemann surface $L_1$. Recall that $f$ restricts to an embedding on $L \cup U$, and is therefore an embedding of an open neighbourhood of $L_1$. For each boundary component $\gamma_j$ of $L_1$, $j = 1,\dots,m$, choose a point $a_j \in \gamma_j$. For each $j = 1,\dots,m$, let $U_j$ be a neighbourhood of $a_j$ in $X$ that does not meet $L$ and such that $f(\overline{U}_j) \subset (\C^*)^2\setminus\overline{P}_{r'}$. Since $S$ is discrete it meets $L_1$ in finitely many points, $S\cap L_1 = S \cap L = \{b_1,\dots,b_n\}$ (recall that $S$ does not meet $U$). We now apply Proposition~\ref{prop:exposeboundarypoints} to obtain a homotopy $f_t : L_1 \to (\C^*)^2$ satisfying the following conditions.
\begin{itemize}
 \item $f_0 = f\rvert_{L_1}$.
 \item $f_t(b_k) = f(b_k)$ for all $t \in I$ and all $k = 1,\dots,n$.
 \item $\sup_{x\in L}d(f_t(x),f(x)) < \epsilon/3$ for all $t \in I$.
 \item $f_t(L_1 \setminus \mathring{L}) \subset (\C^*)^2\setminus\overline{P}_{r'}$ for all $t \in I$ (see Remark \ref{rem:exposeboundarypoints}).
 \item For all $t \in I$, $f_t$ restricts to a holomorphic embedding of some fixed open neighbourhood of $L$.
 \item $f_1 : L_1 \to (\C^*)^2$ is a holomorphic embedding such that $a_1,\dots,a_m$ are $f_1$-exposed.
 \item $\pi_1(f_1(a_j)) \notin \overline{A}_{r'}$ for $j = 1,\dots,m$.
\end{itemize}

If necessary, shrink the sets $U_j$ so that $\pi_1(f_1(\overline{U}_j)) \subset \C^* \setminus \overline{A}_{r'}$ for $j = 1,\dots,m$.

We next apply Proposition \ref{prop:niceprojectionproperty} to the embedding $f_1 : L_1 \to (\C^*)^2$, giving a homotopy $g_t : L_1 \setminus \{a_1,\dots,a_m\} \to (\C^*)^2$ satisfying the following conditions.
\begin{itemize}
 \item $g_0 = f_1\rvert_{L_1 \setminus \{a_1,\dots,a_m\}}$.
 \item $g_t(b_k) = f_1(b_k) = f(b_k)$ for all $t \in I$ and all $k = 1,\dots,n$.
 \item $\sup_{x \in L} d(g_t(x),f_1(x)) < \epsilon/3$ for all $t \in I$.
 \item $g_t(L_1 \setminus (\{a_1,\dots,a_m\} \cup \mathring{L})) \subset (\C^*)^2\setminus\overline{P}_{r'}$ for all $t \in I$ (see Remark \ref{rem:niceprojectionproperty}).
 \item For all $t\in I$, $g_t$ restricts to a holomorphic embedding of some fixed open neighbourhood of $L$.
 \item $g_1 : L_1 \setminus \{a_1,\dots,a_m\} \to (\C^*)^2$ is a holomorphic embedding such that $g_1(bL_1\setminus\{a_1,\dots,a_m\})$ satisfies the $\C^*$-nice projection property.
\end{itemize}


Now note that $g_1(\mathring{L}_1) \subset (\C^*)^2$ is a (non-properly) embedded open Riemann surface that is the interior of the embedded bordered Riemann surface $g_1(L_1 \setminus \{a_1,\dots,a_m\})$, all of whose boundary components are unbounded curves. Since $L \subset X$ is $\OO(\mathring{L}_1)$-convex and $g_1(bL) \cap \overline{P}_{r'} = \varnothing$, it is well known that $g_1(L) \cup \overline{P}_{r'} \subset (\C^*)^2$ is $\OO((\C^*)^2)$-convex (see for example the proof of \cite[Proposition~3.1]{Wold2006a} or \cite[Lemma~5]{Ritter2013}). We may therefore apply Lemma~\ref{lem:woldlemma} to the set $g_1(L) \cup \overline{P}_{r'}$, the curves $g_1(bL_1\setminus\{a_1,\dots,a_m\})$, and the points $g_1(b_k) = f(b_k)$, $k=1,\dots,n$, to obtain a continuous family $\theta_t \in \Aut((\C^*)^2)$ satisfying the conditions as stated in the lemma. In particular, note that if the approximation of the identity by $\theta_t$ is sufficiently close on $\overline{P}_{r'}$ then no points from outside $\overline{P}_{r'}$ are moved into $\overline{P}_r$ by $\theta_t$, that is, we have $\theta_t((\C^*)^2 \setminus \overline{P}_{r'})\subset (\C^*)^2 \setminus \overline{P}_{r}$ for all $t \in I$. Thus $\theta_t(g_1(L_1 \setminus (\{a_1,\dots,a_m\} \cup \mathring{L}))) \subset (\C^*)^2 \setminus \overline{P}_{r}$ for all $t \in I$. Letting $h_t = \theta_t \circ g_1 : L_1 \setminus \{a_1,\dots,a_m\} \to (\C^*)^2$, we see that the following properties hold for $h_t$.
\begin{itemize}
 \item $h_0 = g_1$.
 \item $h_t(b_k) = g_1(b_k) = f(b_k)$ for all $t \in I$ and all $k = 1,\dots,n$.
 \item $\sup_{x \in L}d(h_t(x),g_1(x)) < \epsilon/3$.
 \item For all $t \in I$, $h_t$ restricts to a holomorphic embedding of some fixed open neighbourhood of $L$.
 \item $h_1 : L_1 \setminus \{a_1,\dots,a_m\} \to (\C^*)^2$ is a holomorphic embedding.
 \item $h_1(L_1 \setminus (\{a_1,\dots,a_m\} \cup \mathring{L})) \subset (\C^*)^2 \setminus \overline{P}_{r}$.
 \item $h_1(bL_1\setminus\{a_1,\dots,a_m\}) \subset (\C^*)^2 \setminus \overline{P}_R$.
\end{itemize}

We now combine the three homotopies $f_t\rvert_{L_1 \setminus \{a_1,\dots,a_m\}}, g_t$, and $h_t$ into a single homotopy by matching at corresponding endpoints, and then reparameterise to obtain a single homotopy that we denote by $\tilde{f}_t : L_1 \setminus \{a_1,\dots,a_m\} \to (\C^*)^2$, $t \in I$.

Consider $L_{t'} \subset \mathring{L}_1$ for some suitably chosen $t' < 1$ very close to $1$ and let $L' = L_{t'}$. By taking $t'$ sufficiently close to 1, we can ensure that $\tilde{f}_1(bL') \subset (\C^*)^2 \setminus \overline{P}_R$. Now let $\chi : X \to [0,1]$ be a smooth cutoff function with support in $\mathring{L}_1$ that equals $1$ in a neighbourhood of $L'$. Let $f_t(x) = \tilde{f}_{\chi(x)\cdot t}(x)$. Since $\tilde{f}_0 = f\rvert_{L_1 \setminus \{a_1,\dots,a_m\}}$, we may extend $f_t$ outside of the support of $\chi$ by the initial function $f : X \to (\C^*)^2$ to obtain a homotopy of continuous maps that we still denote by $f_t : X \to (\C^*)^2$. Reparameterising $\sigma_t$ so that $\sigma_1(L') = L_0 = L$, we see that $L'$, $f_t$, and $\sigma_t$ satisfy all conclusions of the lemma.
\end{proof}

\section{Main theorem}
\label{sec:maintheorem}

\noindent
Before stating our main result we give the following lemma, which illustrates how we will construct a $\CC^\infty$-isotopy of smooth embeddings $\Phi_t : X \to X$ of the open Riemann surface $X$ into itself, as required in Theorems~\ref{thm:simplifiedmaintheorem} and \ref{thm:mainresult}.

\begin{lemma}
\label{lem:deformationunion}
Let $X$ be a smooth manifold. Let $K_0 \subset K_1 \subset \dots$ and $L_0 \subset L_1 \subset \dots$ be increasing sequences of compact sets in $X$ such that $K_{j} \subset K_{j+1}^\circ$ and $L_{j} \subset L_{j+1}^\circ$ for all $j = 0, 1, \dots$. Let $M = \bigcup\limits_{j=0}^\infty K_j$ and $N = \bigcup\limits_{j=0}^{\infty}L_j$. Suppose that there exists a sequence of diffeotopies $\phi_{j,t} : X \to X$, $t \in I$, $j = 0,1,\dots$ (see Definition \ref{def:diffeotopy}) such that the following conditions hold.
\begin{enumerate}
 \item $\phi_{j,1}(K_j) = L_j$ for $j = 0,1,\dots$.
 \item $L_{j-1} \subset \phi_{j,1}(K_{j-1})$ for $j = 1, 2, \dots$.
 \item $\phi_{j+1,t}(x) = \phi_{j,t}(x)$ for all $x \in K_{j-1}$ and all $t \in I$, for $j = 1,2,\dots$.
\end{enumerate}
Then $\Phi_t: M \to X$ defined by
\[
  \Phi_t(x) = \lim_{j\to\infty} \phi_{j,t}(x), \quad x\in M, t\in I,
\]
gives a $\CC^\infty$-isotopy of smooth embeddings of $M$ into $X$ such that $\Phi_0$ is the inclusion $M \hookrightarrow X$ and $\Phi_1(M) = N$.
\end{lemma}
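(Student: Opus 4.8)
The plan is to reduce everything to one observation: the limit defining $\Phi_t$ is \emph{locally eventually constant}, so that near each point of $M$ the map $\Phi_t$ simply coincides with one of the given diffeomorphisms $\phi_{j,t}$. Once this is established, smoothness, the embedding property, and the identification of $\Phi_1(M)$ all follow with little effort.

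First I would note that, since $K_j \subset K_{j+1}^\circ$, we have $M = \bigcup_j K_j = \bigcup_j K_j^\circ$, so $M$ is open in $X$ (and likewise $N$ is open in $X$), hence a smooth manifold of the same dimension as $X$. For the stabilisation: given $x \in M$, choose $j_0$ with $x \in K_{j_0-1}$. Condition~(3) gives $\phi_{j_0+1,t}(x) = \phi_{j_0,t}(x)$ for all $t \in I$; since $x$ then also lies in $K_{j_0} \subset K_{j_0+1} \subset \cdots$, a trivial induction yields $\phi_{k,t}(x) = \phi_{j_0,t}(x)$ for all $k \ge j_0$ and all $t$. Thus the sequence $(\phi_{j,t}(x))_j$ is eventually constant, so $\Phi_t(x)$ is well defined and equals $\phi_{j_0,t}(x)$. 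Applying the same argument to a finite subcover shows that for every compact $C \subset M$ there is $j_0$ with $\Phi_t \equiv \phi_{j_0,t}$ on $C$ for all $t \in I$. Consequently $\Phi$ agrees, near every point of $M$, with a $\CC^\infty$ family of diffeomorphisms of $X$, so $(t,x) \mapsto \Phi_t(x)$ is of class $\CC^\infty$ on $I \times M$ and each $\Phi_t$ is a local diffeomorphism; in particular a smooth immersion, and $\Phi_0 = \id_M$ because each $\phi_{j,0} = \id_X$.

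Next I would check injectivity: if $\Phi_t(x) = \Phi_t(y)$, pick $j$ large enough that $x,y \in K_{j-1}$, so that $\phi_{j,t}(x) = \phi_{j,t}(y)$; since $\phi_{j,t}$ is a diffeomorphism, $x = y$. An injective local diffeomorphism between equidimensional manifolds is a diffeomorphism onto an open subset, in particular a smooth embedding, so the $\Phi_t$ form a $\CC^\infty$-isotopy of smooth embeddings of $M$ into $X$ with $\Phi_0$ the inclusion. Finally, to identify $\Phi_1(M)$: by the stabilisation, $\Phi_1$ restricts to $\phi_{j,1}$ on $K_{j-1}$, so $\Phi_1(M) = \bigcup_{j \ge 1} \phi_{j,1}(K_{j-1})$. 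Condition~(1) gives $\phi_{j,1}(K_{j-1}) \subset \phi_{j,1}(K_j) = L_j \subset N$, hence $\Phi_1(M) \subset N$; condition~(2) gives $L_{j-1} \subset \phi_{j,1}(K_{j-1}) \subset \Phi_1(M)$ for every $j$, hence $N \subset \Phi_1(M)$. Therefore $\Phi_1(M) = N$, as required.

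I do not anticipate a genuine obstacle. The one point needing care is to extract from condition~(3) not merely pointwise convergence but local (uniform-on-compacta) stabilisation, since that is precisely what promotes $\Phi$ from a continuous injection to a $\CC^\infty$-smooth embedding. Beyond that, the only subtlety is using conditions~(1) and~(2) on the correct nested pieces $K_{j-1} \subset K_j$ so that $\Phi_1(M)$ is pinned down to be exactly $N$ and not merely a subset of it.
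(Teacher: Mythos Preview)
Your proof is correct and follows essentially the same approach as the paper's. The paper is simply much terser: it declares that ``it is clear that $\Phi_t$ is a well-defined smooth isotopy of injective immersions of $M$'' and then invokes the equidimensionality to conclude these are embeddings, whereas you spell out the local stabilisation coming from condition~(3) and the injectivity argument explicitly; the identification $\Phi_1(M)=N$ via conditions~(1) and~(2) is carried out the same way in both.
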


\begin{proof}
First note that both $M$ and $N$ are open submanifolds of $X$. If $M$ is empty the lemma is vacuous, so assume $M \neq \varnothing$. It is clear that $\Phi_t$ is a well-defined smooth isotopy of injective immersions of $M$, which are then necessarily embeddings since the dimensions of the source and target agree. Let $x \in M$, so that $x \in K_j$ for some $j$. Then 
\[\Phi_1(x) = \phi_{j+1,1}(x) \in \phi_{j+1,1}(K_{j+1}) = L_{j+1} \subset N\,.\]
Conversely, let $y \in N$. For some $j$ we then have, applying condition (2),
\[y \in L_j \subset \phi_{j+1,1}(K_j) = \Phi_1(K_j) \subset \Phi_1(M)\,,\]
thereby completing the proof.
\end{proof}
 
We now state and prove our main theorem. We apply the preceding lemma in the case that $X$ is an open Riemann surface and $\bigcup\limits_{j=0}^{\infty}K_j = X$ to obtain a $\CC^\infty$-isotopy of smooth embeddings $\Phi_t : X \to X$ such that $\Phi_1(X)$ properly holomorphically embeds into $(\C^*)^2$.

\begin{theorem}
\label{thm:mainresult}
Let $X$ be an open Riemann surface, $K \subset X$ be an $\OO(X)$-convex compact set, and $S \subset X$ be a discrete set. Let $f : X \to (\C^*)^2$ be a continuous map that restricts to a holomorphic embedding of some open neighbourhood of $K$, and such that $f\rvert_S : S \to (\C^*)^2$ is a proper injection. Given $\epsilon > 0$, there exists a $\CC^\infty$-isotopy of smooth embeddings $\Phi_t : X \to X$, $t \in I$, and a continuous family of continuous maps $f_t : \Phi_t(X) \to (\C^*)^2$, $t \in I$, satisfying the following conditions.
\begin{enumerate}
 \item $\Phi_0 = \id_X$.
 \item \label{thm:embedding_approx:inclusion} $\Phi_t\rvert_V = \id_V$ for all $t \in I$, for some fixed open neighbourhood $V\subset X$ of $K \cup S$.
 \item $f_0 = f : X \to (\C^*)^2$.
 \item \label{thm:embedding_approx:embedding_nbhd} $f_t\rvert_U : U \to (\C^*)^2$ is a holomorphic embedding for all $t \in I$, for some fixed open neighbourhood $U \subset V$ of $K$.
 \item \label{thm:embedding_approx:approximation} $\sup\limits_{x \in K} d(f_t(x),f(x)) < \epsilon$ for all $t \in I$.
 \item \label{thm:embedding_approx:interpolation} $f_t\rvert_S = f\rvert_S$ for all $t \in I$.
 \item $f_1 : \Phi_1(X) \to (\C^*)^2$ is a proper holomorphic embedding.
\end{enumerate}
\end{theorem}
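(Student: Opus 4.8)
The plan is to build the isotopy $\Phi_t$ and the family $f_t$ by an inductive exhaustion argument, using Lemma~\ref{lem:enlargingdomainofembedding} at each step to enlarge the domain of an embedding while controlling the image, and Lemma~\ref{lem:deformationunion} to splice the resulting diffeotopies into a single smooth isotopy of $X$ into itself. First I would fix a normal exhaustion $K = K_0 \subset K_1 \subset \cdots$ of $X$ by $\OO(X)$-convex compact sets that are the closures of smoothly bounded domains (so each $\overline{K_{j+1}\setminus \mathring K_j}$ is a compact bordered Riemann surface, after possibly using Morse theory to pass handles one at a time — this is where the ``basic techniques from Morse theory'' enter). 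By shrinking $f$ near $K$ if necessary and composing with a dilation of $(\C^*)^2$, I may assume at the outset that $f(bK_1) \subset (\C^*)^2 \setminus \overline P_{r_1}$ for a suitable $r_1$ from the exhaustion $(\overline P_{r_j})$ of $(\C^*)^2$; the hypothesis that $f\rvert_S$ is a proper injection means that after a preliminary small deformation (fixed near $K$) I can assume $f$ is already injective on $S$ and sends $S$ properly to $(\C^*)^2$.

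The inductive step runs as follows. Suppose at stage $j$ we have a compact bordered Riemann surface $L^{(j)} \subset X$ containing $K_j$ in its interior, a continuous map $f^{(j)} : X \to (\C^*)^2$ restricting to a holomorphic embedding near $L^{(j)}$ with $f^{(j)}(bL^{(j)}) \subset (\C^*)^2 \setminus \overline P_{r_j}$ and $f^{(j)}\rvert_S = f\rvert_S$, together with the diffeotopies already constructed. Apply Lemma~\ref{lem:enlargingdomainofembedding} with $L = L^{(j)}$, the discrete set $S$, a neighbourhood $U$ of $bL^{(j)}$ chosen disjoint from $K_j$ (so the resulting diffeotopy is the identity on $K_j$, hence on $K_{j-1}$, giving condition (3) of Lemma~\ref{lem:deformationunion}), target radius $R = r_{j+1}$, and $\epsilon$ replaced by $\epsilon_j$ where $\sum \epsilon_j < \epsilon$. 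This yields $L' = L^{(j+1)} \supset K_{j+1}$ (choosing $L'$ large enough in the lemma, which we may since $L'$ can be taken arbitrarily close to $L_1$, i.e.\ arbitrarily large inside the available neighbourhood — here one enlarges the starting $L^{(j)}$ first so that $L_1 \supset K_{j+1}$), a homotopy $f_t^{(j)}$ fixed on $S$, equal to a holomorphic embedding near $L^{(j)}$, approximating $f^{(j)}$ to within $\epsilon_j$ on $L^{(j)}$, with $f_1^{(j)}$ a holomorphic embedding near $L^{(j+1)}$ satisfying $f_1^{(j)}(bL^{(j+1)}) \subset (\C^*)^2 \setminus \overline P_{r_{j+1}}$ and $f_1^{(j)}(L^{(j+1)}\setminus \mathring L^{(j)}) \subset (\C^*)^2 \setminus \overline P_{r_j}$, and a diffeotopy $\sigma_t^{(j)}$ with $\sigma_1^{(j)}(L^{(j+1)}) = L^{(j)}$ and $\sigma_t^{(j)} = \id$ off $U$. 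Set $f^{(j+1)} = f_1^{(j)}$ and define $\phi_{j,t}$ by composing $\sigma_t^{(j)}$ with the previously accumulated diffeotopies; the verification of conditions (1)--(3) of Lemma~\ref{lem:deformationunion} (with $K_j$ the exhaustion and $L_j = \sigma^{(j)}_1 \circ \cdots \circ \sigma^{(0)}_1(K_{j+?})$, appropriately bookkept) is routine.

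Passing to the limit, Lemma~\ref{lem:deformationunion} gives the $\CC^\infty$-isotopy $\Phi_t : X \to X$ with $\Phi_0 = \id_X$, fixing a neighbourhood $V$ of $K\cup S$ (since all $\phi_{j,t}$ fix $K_0 = K$ and, after shrinking the neighbourhoods $U$, fix a neighbourhood of $S$), and with $\Omega := \Phi_1(X) = \bigcup L_j$ an open subset of $X$. The maps $f_1^{(j)}$ stabilise on each compact subset of $\Omega$ once $j$ is large enough, because on $L^{(j)}$ all further modifications are $\epsilon$-small in the $d$-metric and, crucially, the ``escaping'' condition $f_1^{(k)}(L^{(k+1)} \setminus \mathring L^{(k)}) \subset (\C^*)^2 \setminus \overline P_{r_k}$ forces the later perturbations to happen further and further out; so $F := \lim_j f_1^{(j)}$ exists, is holomorphic and injective on $\Omega$ (a limit of holomorphic embeddings that is injective by the escaping control — this is the standard Wold properness mechanism), restricts to a holomorphic embedding near $K$ approximating $f$ to within $\epsilon$ on $K$, equals $f$ on $S$, and is \emph{proper} into $(\C^*)^2$ since $F(L^{(j+1)} \setminus \mathring L^{(j)}) \cap \overline P_{r_j} = \varnothing$ means the preimage of each $\overline P_{r_j}$ is relatively compact in $\Omega$. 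Finally, to produce the continuous family $f_t : \Phi_t(X) \to (\C^*)^2$ promised in the statement, concatenate (with a diagonal reparametrisation, so that the $j$-th homotopy $f_t^{(j)}$ runs on the parameter subinterval $[1 - 2^{-j}, 1 - 2^{-j-1}]$) the homotopies $f_t^{(j)}$, each transported by the appropriate diffeomorphism so that at parameter $1 - 2^{-j}$ the map is defined on $\Phi_{1-2^{-j}}(X)$; continuity at $t = 1$ holds because on any fixed compact subset of $\Omega$ the tail of the concatenation is $\epsilon$-small. I expect the main obstacle to be precisely this last gluing: arranging the countably many finite homotopies $f_t^{(j)}$ and diffeotopies $\sigma_t^{(j)}$ — which live on different (growing) domains and must be matched at endpoints through the diffeomorphisms $\sigma^{(k)}_1$ — into a single jointly continuous family $(t,x) \mapsto f_t(x)$ on $\bigcup_t \{t\}\times \Phi_t(X)$, together with the careful choice of neighbourhoods and error bounds $\epsilon_j$ making the limit map genuinely proper rather than merely a limit of proper maps.
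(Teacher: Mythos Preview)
There is a genuine gap in your inductive step. You claim that Lemma~\ref{lem:enlargingdomainofembedding} produces $L' = L^{(j+1)} \supset K_{j+1}$, but this is not what the lemma gives: examining its proof, the output $L'$ lies inside $L \cup U$ where $U$ is shrunk to a collar of $bL$ on which the map is already a holomorphic embedding and on which $f(U)$ already avoids $\overline P_{r'}$. Thus $L'$ is only a slight enlargement of $L$, diffeotopic to it via $\sigma_t$; in particular $L'$ has the same topology as $L$. Your parenthetical ``enlarge the starting $L^{(j)}$ first so that $L_1 \supset K_{j+1}$'' is circular, since enlarging the domain of the holomorphic embedding is precisely the content of the inductive step. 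If your scheme worked as written, the union $\bigcup L^{(j)} \supset \bigcup K_j$ would equal $X$ and the limit $F$ would be a proper holomorphic embedding of $X$ itself into $(\C^*)^2$ with no change of complex structure --- this is exactly the open problem the theorem sidesteps by producing only an embedding of $\Phi_1(X) \subsetneq X$.

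The paper's mechanism is different in a way that matters. The sets $L_j$ are \emph{not} required to exhaust $X$; their union is only $\Phi_1(X)$. What must exhaust $X$ is a separate sequence $K_j$ (sublevel sets of a strictly subharmonic Morse function $\rho$), and the diffeotopies in Lemma~\ref{lem:deformationunion} must satisfy $\phi_{j,1}(K_j) = L_j$. This is arranged by the composition
\[
\phi_{j,t} = \sigma_{j,t}^{-1} \circ \phi_{j-1,t} \circ \psi_{j,t},
\]
where $\psi_{j,t}$ is a diffeotopy of $X$ with $\psi_{j,1}(K_j) = K_{j-1}$, obtained from the negative gradient flow of $\rho$ when there are no critical values in $[c_{j-1},c_j]$. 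Two diffeotopies are therefore needed at each step, one shrinking on the source side and one (the inverse of the $\sigma$ from Lemma~\ref{lem:enlargingdomainofembedding}) expanding on the target side; your proposal has only the latter, and the vague ``composing $\sigma_t^{(j)}$ with the previously accumulated diffeotopies'' together with the undefined ``$L_j = \sigma_1^{(j)}\circ\cdots\circ\sigma_1^{(0)}(K_{j+?})$'' does not produce maps satisfying the hypotheses of Lemma~\ref{lem:deformationunion}. Finally, when $K_j$ has different topology from $K_{j-1}$ (an index-$1$ critical point of $\rho$), a separate argument using Mergelyan's theorem is required to first extend the holomorphic embedding across the attached $1$-cell before Lemma~\ref{lem:enlargingdomainofembedding} can be invoked; this critical case, which is where the ``Morse theory'' actually enters, is absent from your outline.
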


\begin{proof}
First note that if $K = \varnothing$ we may deform $f$ on a neighbourhood of a small compact disc $D \subset X \setminus S$ to a map that restricts to a holomorphic embedding on a neighbourhood of $D$, without changing the values of $f$ on $S$. Thus we may assume that $K \neq \varnothing$.

We now proceed to first prove the result in the case that $S = \varnothing$. Following this, we explain the necessary additions and modifications required when $S \neq \varnothing$.

Suppose $S = \varnothing$. Let $W$ be an open neighbourhood of $K$ on which $f$ restricts to a holomorphic embedding $f\rvert_W : W \to (\C^*)^2$. Since $X$ is a 1-dimensional Stein manifold, there exists a smooth strictly subharmonic exhaustion function $\rho : X \to \R$ such that $K \subset \{\rho < 0\}$ and $X \setminus W \subset \{\rho > 0\}$. Without loss of generality we may assume that $\rho$ is a Morse function with $0$ as a regular value, and that the preimage of every critical value of $\rho$ contains a single critical point. Letting $K_0 = \{\rho \le 0\}$, we have $K \subset \mathring{K}_0 \subset K_0 \subset W$, so that $f$ still restricts to an embedding of an open neighbourhood of $K_0$.

Let $p_1, p_2, \dots$ be the critical points of $\rho$ outside $K_0$, ordered so that $\rho(p_1) < \rho(p_2) < \dots$. Let $0 = c_0 < c_1 < c_2 < \dots$ be a sequence of regular values of $\rho$ with $\lim\limits_{j \to \infty} c_j = \infty$, chosen so that $c_{2j-1} < \rho(p_j) < c_{2j}$ for $j = 1, 2, \dots$.
(If $\rho$ has only finitely many critical points $p_1, \dots, p_n$, choose the tail $c_{2n+1}, c_{2n+2}, \dots$ arbitrarily so that $\lim\limits_{j \to \infty} c_j = \infty$.) Letting $K_j = \{\rho \le c_j\}$, each $K_j$ is then an $\OO(X)$-convex compact bordered Riemann surface satisfying $K_j \subset \mathring{K}_{j+1}$, and we have $\bigcup\limits_{j=0}^\infty K_j = X$. 

Choose a regular value $c_{-1} < 0$ of $\rho$ and set $K_{-1} = \{\rho \le c_{-1}\}$. By taking $c_{-1}$ sufficiently close to $0$, we may assume $K \subset \mathring{K}_{-1} \subset K_{-1} \subset \mathring{K}_0$. We now construct an increasing sequence of compact bordered Riemann surfaces $L_{-1} \subset L_0 \subset L_1 \subset \dots$ with $L_{j-1} \subset L_{j}^\circ$ for all $j = 0, 1, \dots$, a sequence of smooth diffeotopies $\phi_{j,t} : X \to X$, $j = 0, 1, \dots$, and a sequence of homotopies of continuous maps $f_{j,t} : X \to (\C^*)^2$, $t \in I$, $j = 0, 1, \dots$, such that the following conditions hold for all $j = 0, 1, \dots$ (provided that they make sense).

\begin{enumerate}[(i)]
 \item \label{proof:embedding_approx:deform_compact}$\phi_{j,1}(K_j) = L_j$.
 \item \label{proof:embedding_approx:keep_controlled}$L_{j-1} \subset \phi_{j,1}(K_{j-1})$.
 \item \label{proof:embedding_approx:matching} $\phi_{j,t}(x) = \phi_{j-1,t}(x)$ for all $x \in K_{j-2}$ and all $t \in I$.
 \item \label{proof:embedding_approx:matching_endpoints} $f_{j,0} = f_{j-1,1}$.
 \item \label{proof:embedding_approx:approximation} $\sup\limits_{x \in L_{j-1}}d(f_{j,t}(x),f_{j,0}(x)) < \epsilon_j$ for some small $\epsilon_j > 0$ to be specified, for all $t \in I$.
 \item \label{proof:embedding_approx:hol_embedding} There exists a neighbourhood of $L_{j-1}$ on which $f_{j,t} : X \to (\C^*)^2$ restricts to a holomorphic embedding for each $t \in I$.
 \item \label{proof:embedding_approx:hol_embedding_final} There exists a neighbourhood of $L_j$ on which $f_{j,1} : X \to (\C^*)^2$ restricts to a holomorphic embedding.
 \item \label{proof:embedding_approx:boundary_far_away} $f_{j,1}(bL_j) \subset (\C^*)^2 \setminus \overline{P}_j$.
 \item \label{proof:embedding_approx:collar_far_away} $f_{j,1}(L_j \setminus \mathring{L}_{j-1}) \subset (\C^*)^2 \setminus \overline{P}_{j-1}$.
\end{enumerate}

Assuming this construction complete, we now show that the theorem holds. Let $\Phi_t = \lim\limits_{j\to\infty}\phi_{j,t}$. By Lemma \ref{lem:deformationunion} and conditions (\ref{proof:embedding_approx:deform_compact})--(\ref{proof:embedding_approx:matching}), $\Phi_t : X \to X$ is a $\CC^\infty$-isotopy of smooth embeddings satisfying $\Phi_0 = \id_X$ and $\Phi_1(X) = \bigcup\limits_{j=0}^\infty L_j$. We will shortly see that for $j=0$ we may choose $\phi_{0,t} = \id_X$ for all $t \in I$. Then the fact that $K \subset \mathring{K}_{-1}$, together with property (\ref{proof:embedding_approx:matching}), gives condition (\ref{thm:embedding_approx:inclusion}) of the theorem with $V = \mathring{K}_{-1}$.
 
We will also see that we have $f_{0,0} = f$. At each step $j = 0, 1, \dots$ we reparametrise the family $f_{j,t}$ by scaling $t \in I$ so that we instead have $t \in 2^{-(j+1)}I$. Glue together each pair of homotopies $f_{j-1,t}$ and $f_{j,t}$ at the matching endpoints $f_{j-1,2^{-j}} = f_{j,0}$ so as to obtain a continuous family of continuous maps $f_t : X \to (\C^*)^2$, $t \in [0,1)$. Assuming that the $\epsilon_j$ are sufficiently small, properties (\ref{proof:embedding_approx:approximation}) and (\ref{proof:embedding_approx:hol_embedding_final}) show that $f_t$ converges as $t \to 1$ locally uniformly on $\bigcup\limits_{j=0}^\infty L_j = \Phi_1(X)$ to a holomorphic map $F : \Phi_1(X) \to (\C^*)^2$. For each $t \in [0,1)$, restrict $f_t$ to $\Phi_t(X) \subset X$. Setting $f_1 = F$ we obtain a continuous family of continuous maps $f_t : \Phi_t(X) \to (\C^*)^2$ such that $f_1$ is holomorphic.

Taking $L_{-1} = K_{-1}$, as we shall do, and recalling that $K \subset \mathring{K}_{-1}$, property (\ref{proof:embedding_approx:hol_embedding}) ensures that condition (\ref{thm:embedding_approx:embedding_nbhd}) in the theorem holds for $t \in [0,1)$. If the $\epsilon_j > 0$ are chosen sufficiently small, then property (\ref{proof:embedding_approx:approximation}) ensures that condition (\ref{thm:embedding_approx:approximation}) holds for all $t \in I$.

For each $j$, $f_{j,2^{-(j+1)}} = f_{j+1,0}$ is a holomorphic embedding of a neighbourhood of $L_j$ by property (\ref{proof:embedding_approx:hol_embedding_final}). Choosing the $\epsilon_j$ sufficiently small ensures that for each $j$, the uniform limit $f_1$ is sufficiently close to $f_{j+1,0}$ on $L_j$ to ensure that $f_1$ restricts to a holomorphic embedding of a neighbourhood of the smaller compact set $L_{j-1}$. Thus $f_1$ is a holomorphic embedding of $\bigcup\limits_{j=0}^\infty L_j = \Phi_1(X)$. Again, by taking the $\epsilon_j$ sufficiently small, condition (\ref{proof:embedding_approx:collar_far_away}) ensures that $f_1(L_j \setminus \mathring{L}_{j-1}) \subset (\C^*)^2\setminus\overline{P}_{j-1}$ holds for all $j = 0, 1, \dots$, implying that $f_1$ is a proper map. This completes the proof when $S = \varnothing$, assuming conditions (\ref{proof:embedding_approx:deform_compact})--(\ref{proof:embedding_approx:collar_far_away}) hold.

Let us now show that the conditions above can be ensured for $j = 0$. We let $\phi_{0,t} = \id_X$ for all $t \in I$, and set $L_{-1} = K_{-1}$, $L_0 = K_0$. Conditions (\ref{proof:embedding_approx:deform_compact}) and (\ref{proof:embedding_approx:keep_controlled}) hold, while (\ref{proof:embedding_approx:matching}) is vacuous. Letting $f_{0,t} = f$ for $t \in I$ and recalling that $f$ is an embedding in a neighbourhood of $K_0$ gives conditions (\ref{proof:embedding_approx:approximation})--(\ref{proof:embedding_approx:hol_embedding_final}), while conditions (\ref{proof:embedding_approx:matching_endpoints}), (\ref{proof:embedding_approx:boundary_far_away}), and (\ref{proof:embedding_approx:collar_far_away}) are vacuous.

Now fix some $j > 0$ and suppose the conditions are satisfied for $j-1$. Further suppose for the moment that there are no critical points of $\rho$ in the set $K_{j} \setminus \mathring{K}_{j-1}$. This is the \emph{non-critical case}, in which the topology of the sublevel sets $K_{j-1}$ and $K_j$ is the same. Note that all precisely all odd values of $j$ give inductive steps of non-critical type, unless $\rho$ has only finitely many critical points, in which case eventually every step is non-critical.

Note that $\phi_{j-1,1}(K_{j-2})$ is a compact subset of $\phi_{j-1,1}(\mathring{K}_{j-1}) = \mathring{L}_{j-1}$. Thus there exists $t_0 < 1$ close to $1$ such that $\cup_{t_0 \le t \le 1}\phi_{j-1,t}(K_{j-2})$ is also a compact subset of $\mathring{L}_{j-1}$. We now apply Lemma \ref{lem:enlargingdomainofembedding} to obtain a compact bordered Riemann surface with smooth boundary $L_j$ whose interior contains $L_{j-1}$, a diffeotopy $\sigma_{j,t} : X \to X$ satisfying $\sigma_{j,1}(L_{j}) = L_{j-1}$ with $\sigma_{j,t}$ equal to the identity on $\cup_{t_0 \le t \le 1}\phi_{j-1,t}(K_{j-2})$, and a homotopy $f_{j,t} : X \to X$ linking $f_{j-1,1} = f_{j,0}$ to some continuous map $f_{j,1} : X \to (\C^*)^2$. The homotopy $f_{j,t}$ and its final map $f_{j,1}$ satisfy conditions (\ref{proof:embedding_approx:approximation})--(\ref{proof:embedding_approx:collar_far_away}) above.

Note that $\sigma_{j,1}(L_{j-1})$ is a compact subset of $\sigma_{j,1}(\mathring{L}_j) = \mathring{L}_{j-1}$, so that $\phi_{j-1,1}^{-1}(\sigma_{j,1}(L_{j-1}))$ is a compact subset of $\phi_{j-1,1}^{-1}(\mathring{L}_{j-1}) = \mathring{K}_{j-1}$. Because $\rho$ is a Morse function and there are no critical points in the compact set $K_j \setminus \mathring{K}_{j-1}$, there exists a diffeotopy $\psi_{j,t} : X \to X$ such that $\psi_{j,1}(K_j) = K_{j-1}$, where we may choose $\psi_{j,t}$ to equal the identity on the compact set $\phi_{j-1,1}^{-1}(\sigma_{j,1}(L_{j-1})) \cup K_{j-2} \subset \mathring{K}_{j-1}$.

We now define $\phi_{j,t} : X \to X$ as follows.
\begin{equation*}\label{formula:diffeotopy}
 \phi_{j,t} = \left\{
	\begin{array}{ll}
		\phi_{j-1,t} \circ \psi_{j,t} & \text{for } 0 \le t \le t_0,\\
		\sigma_{j,t}^{-1} \circ \phi_{j-1,t} \circ \psi_{j,t} & \text{for } t_0 \le t \le 1,
	\end{array} \right.\tag{\dag}
\end{equation*}
where we have reparameterised the diffeotopy $\sigma_{j,t}$ in the $t$-variable so that $t \in [t_0,1]$, with $\sigma_{j,t_0} = \id_X$, $\sigma_{j,1}(L_j) = L_{j-1}$, and so that $\phi_{j,t}$ as defined above is smooth at $t_0$.

We now show that conditions (\ref{proof:embedding_approx:deform_compact})--(\ref{proof:embedding_approx:matching}) hold. We have
\[
 \phi_{j,1}(K_j) = \sigma_{j,1}^{-1} \circ \phi_{j-1,1} \circ \psi_{j,1}(K_j) = \sigma_{j,1}^{-1} \circ \phi_{j-1,1}(K_{j-1}) = \sigma_{j,1}^{-1}(L_{j-1}) = L_j\,,
\]
so condition (\ref{proof:embedding_approx:deform_compact}) holds.

Since $\psi_{j,t}$ is the identity on $\phi_{j-1,1}^{-1}(\sigma_{j,1}(L_{j-1})) \subset K_{j-1}$, applying $\psi_{j,1}$ to both sides gives $\phi_{j-1,1}^{-1}(\sigma_{j,1}(L_{j-1})) \subset \psi_{j,1}(K_{j-1})$, so that $L_{j-1} \subset \sigma_{j,1}^{-1}\circ\phi_{j-1,1}\circ\psi_{j,1}(K_{j-1}) = \phi_{j,1}(K_{j-1})$, showing that condition (\ref{proof:embedding_approx:keep_controlled}) holds.

Let $x \in K_{j-2}$. Suppose that $0 \le t \le t_0$. Then $\phi_{j,t}(x) = \phi_{j-1,t} \circ \psi_{j,t}(x) = \phi_{j-1,t}(x)$ as required, since $\psi_{j,t}$ equals the identity on $K_{j-2}$. On the other hand, if $t_0 \le t \le 1$, we still have $\phi_{j,t}(x) = \sigma_{j,t}^{-1} \circ \phi_{j-1,t} \circ \psi_{j,t}(x) = \sigma_{j,t}^{-1} \circ \phi_{j-1,t}(x) = \phi_{j-1,t}(x)$, since $\phi_{j-1,t}(x) \in \cup_{t_0 \le t \le 1}\phi_{j-1,t}(K_{j-2})$, a set on which $\sigma_{j,t}$ was chosen to be the identity. Condition (\ref{proof:embedding_approx:matching}) holds. This completes the proof for every non-critical step.

Now suppose that the conditions are satisfied for some $j-1$, but that there exists a single critical point $q$ of $\rho$ in the set $K_j \setminus \mathring{K}_{j-1}$. This is called the \emph{critical case}, and if $\rho$ has infinitely many critical points then for every even value of $j$ the inductive step will be critical. (If $\rho$ has $n < \infty$ critical points, then only for $j = 2, 4, \dots, 2n$ will the step be of critical type.) Since $\rho$ is subharmonic, the Morse index of $\rho$ at $q$ can only be either $0$ or $1$. We deal with these two cases in turn.

Suppose that the index of $\rho$ at $q$ is $0$. Then $K_j$ is diffeomorphic to the disjoint union of $K_{j-1}$ and a small closed disc $D \subset X$ that contains $q$ in its interior. More precisely, there exists a compact bordered Riemann surface $K'_j \subset X$ containing $K_{j-1}$ in its interior, a compact disc $D \subset X$ disjoint from $K'_j$, and a diffeotopy $\psi_{j,t}$ of $X$, such that $K_j = K'_j \cup D$ and $\psi_{j,1}(K'_j) = K_{j-1}$. Ignoring $D$ for the moment, we use the exact same procedure as for the non-critical case to find a compact bordered Riemann surface with smooth boundary $L'_j$ containing $L_j$ in its interior, a diffeotopy $\phi_{j,t}$ of $X$, and a homotopy $f'_{j,t} : X \to (\C^*)^2$, together satisfying the required conditions with respect to $K'_j$ and $L'_j$. Now let $\tilde{D} = \phi_{j,1}(D)$. Set $L_j = L'_j \cup \tilde{D}$, and extend the homotopy $f'_{j,t}$ for $t \in [1,2]$ by deforming $f'_{j,1}$ in a small neighbourhood of $\tilde{D}$ disjoint from $L'_j$ so that the final map $g_j$ is an embedding in a neighbourhood of $\tilde{D}$ satisfying $g_j(\tilde{D}) \subset (\C^*)^2 \setminus \overline{P}_j$. Call this extended homotopy $f_{j,t}$, and reparameterise so that $t \in [0,1]$. Then the conditions all hold for $K_j$, $L_j$, $\phi_{j,t}$ and $f_{j,t}$.

Now consider the case when the index of $\rho$ at $q$ is $1$. In this case, $K_j$ is obtained by adding a 1-handle to $K_{j-1}$. That is, there is a smoothly embedded 1-cell $E \subset \mathring{K}_j$ passing through $q$, with $E \cap K_{j-1} = bE$, such that $K_j$ deformation retracts to $K_{j-1} \cup E$. Furthermore, given any neighbourhood $W$ of $E$ we may find a 1-handle $H \subset K_j$ attached to $bK_{j-1}$, $E \subset H \subset W$, such that $K'_j := K_{j-1} \cup H$ is a compact bordered Riemann surface and there exists a diffeotopy $\psi_{j,t}$ of $X$ satisfying $\psi_{j,1}(K_j) = K'_j$. Here, $\psi_{j,t}$ can be chosen to be the identity outside of a neighbourhood of $K_j \setminus \mathring{K'_j}$.

Recall that the diffeotopy $\phi_{j-1,t}$ of $X$ satisfies $\phi_{j-1,1}(K_{j-1}) = L_{j-1}$. Let $\tilde{E} = \phi_{j-1,1}(E)$, a 1-cell satisfying $\tilde{E} \cap L_{j-1} = b\tilde{E}$. Recalling that $f_{j-1,1}$ is an embedding of a neighbourhood of $L_{j-1}$ satisfying $f_{j-1,1}(bL_{j-1}) \subset (\C^*)^2 \setminus \overline{P}_{j-1}$, we see that the endpoints of $\tilde{E}$ satisfy $f_{j-1,1}(b\tilde{E}) \subset (\C^*)^2 \setminus \overline{P}_{j-1}$. There is no obstruction to continuously deforming $f_{j-1,1}$ in a neighbourhood of $\tilde{E}$, while keeping it fixed on a neighbourhood of $L_{j-1}$ (and thus also fixed near $b\tilde{E}$), to obtain a map $g_{j-1} : X \to (\C^*)^2$ that remains a holomorphic embedding of a neighbourhood of $L_{j-1}$, but is now a smooth embedding on $\tilde{E}$ satisfying $g_{j-1}(\tilde{E}) \subset (\C^*)^2 \setminus \overline{P}_{j-1}$.
We now use Mergelyan's theorem to approximate $g_{j-1}$, uniformly on a neighbourhood of $L_{j-1}$ and in the $\CC^1$-topology on $\tilde{E}$, by a function $\tilde{g}_{j-1}$ that is holomorphic in a neighbourhood of $L_{j-1} \cup \tilde{E}$. If the approximation is sufficiently good then $\tilde{g}_{j-1}$ restricts to a holomorphic embedding of some smaller neighbourhood of $L_{j-1} \cup \tilde{E}$, on which it will be homotopic to $g_{j-1}$ through holomorphic embeddings (via a convex linear combination). Using a suitable cutoff function in the homotopy parameter that equals $1$ in an even smaller neighbourhood of $L_{j-1} \cup \tilde{E}$, we obtain a homotopy from $g_{j-1}$ to some map $\tilde{g}'_{j-1} : X \to (\C^*)^2$ which equals $\tilde{g}_{j-1}$ in a neighbourhood of $L_{j-1} \cup \tilde{E}$.

For a sufficiently small handle $H$, attached to $K_{j-1}$ and containing $E$, the set $\tilde{H} = \phi_{j-1,1}(H)$ will be a handle attached to $L_{j-1}$, containing $\tilde{E}$, and contained in the set on which $\tilde{g'}_{j-1}$ is a holomorphic embedding. Furthermore, we may assume that $\tilde{g'}_{j-1}(\tilde{H}) \subset (\C^*)^2 \setminus \overline{P}_{j-1}$. Setting $L'_j = \phi_{j-1,1}(K'_j) = L_{j-1} \cup \tilde{H}$ we also see that $\tilde{g'}_{j-1}(bL'_j) \subset (\C^*)^2 \setminus \overline{P}_{j-1}$.
Applying Lemma \ref{lem:enlargingdomainofembedding} to $L'_j$ and the map $\tilde{g}'_{j-1} : X \to (\C^*)^2$ gives a compact smoothly bordered Riemann surface $L_j$ containing $L'_j$ in its interior, a diffeotopy $\sigma_{j,t}$ of $X$ satisfying $\sigma_{j,1}(L_j) = L'_j$, and a homotopy $f'_{j,t}$ from $\tilde{g}'_{j-1}$ to a map $g_j : X \to (\C^*)^2$ satisfying all required conditions. Combining the homotopies above with $f'_{j,t}$ and reparameterising gives a homotopy $f_{j,t}$ from $f_{j-1,1}$ to $g_j$ satisfying all required conditions. We may now repeat the same construction as in the non-critical case to give a diffeotopy $\phi_{j,t}$ of $X$ so that all conditions hold for $K_j$, $L_j$, $\phi_{j,t}$ and $f_{j,t}$.

Now suppose that $S \neq \varnothing$ is a discrete set and that $f\rvert_S : S \to (\C^*)^2$ is a proper injection. We assume that $S \subset X$ is an infinite set; the case when $S$ is finite also follows from the following argument. As in the case when $S = \varnothing$, we obtain a smooth strictly subharmonic Morse exhaustion function $\rho : X \to \R$ with $0$ as a regular value such that the preimage of every critical value contains a single critical point. We may now also assume that none of the points of $S$ lie in the preimages of the critical points of $\rho$ and that $S \cap \{\rho = 0\} = \varnothing$. Recall that $f$ restricts to an embedding of an open neighbourhood of $K_0 = \{\rho \le 0\}$.

As before, we choose an increasing sequence of regular values $0 = c_0 < c_1 < c_2 < \dots$ of $\rho$ with $\lim\limits_{j \to \infty}c_j = \infty$ satisfying $c_{2j-1} < \rho(p_j) < c_{2j}$ for $j = 1,2,\dots$, where $p_j$ are the critical points of $\rho$ outside $K_0$. Set $K_j = \{\rho \le c_j\}$, $j = 1,2,\dots$. Here we may assume that each pair $c_{2j-1}$ and $c_{2j}$ is sufficiently close to $\rho(p_j)$ that $S \cap (K_{2j}\setminus \mathring{K}_{2j-1}) = \varnothing$ for all $j = 1,2,\dots$. That is, the points of $S$ outside $K_0$ only occur in the sets $\mathring{K_{j}} \setminus K_{j-1}$ when $K_j$ has the same topology as $K_{j-1}$ (a non-critical step, in our present terminology). Since $S$ is discrete, each such set $\mathring{K_{j}} \setminus K_{j-1}$ contains finitely many points of $S$.

Let $S_j = S \setminus K_{j-1}$, $j = 1, 2, \dots$, be the points of $S$ outside the compact set $K_{j-1}$. Since $f\rvert_S : S \to (\C^*)^2$ is a proper map, we may choose an increasing sequence $r_j \in \R$ with $\lim\limits_{j\to\infty} r_j = \infty$ such that $f(S_j) \cap \overline{P}_{r_j} = \varnothing$ for all $j = 1,2,\dots$ (recall that $P_r = \varnothing$ for $r \le 0$).

Setting $K_{-1} = \{\rho \le c_{-1}\}$ for some regular value $c_{-1} < 0$ close to $0$, so that $K \subset \mathring{K}_{-1}$, we now construct as before sets $L_j$, smooth diffeotopies $\phi_{j,t}$ of $X$, and homotopies of continuous maps $f_{j,t} : X \to (\C^*)^2$ so that conditions (\ref{proof:embedding_approx:deform_compact})--(\ref{proof:embedding_approx:hol_embedding_final}) given earlier hold. We replace conditions (\ref{proof:embedding_approx:boundary_far_away}) and (\ref{proof:embedding_approx:collar_far_away}) by the following two conditions (\ref{proof:embedding_approx:boundary_far_away_S}$'$) and (\ref{proof:embedding_approx:collar_far_away_S}$'$), and add two additional conditions (\ref{proof:embedding_approx:interpolation_S}$'$) and (\ref{proof:embedding_approx:identity_diffeo_S}$'$). 
\begin{enumerate}[(i$'$)]
 \setcounter{enumi}{7}
 \item \label{proof:embedding_approx:boundary_far_away_S} $f_{j,1}(bL_j) \subset (\C^*)^2\setminus \overline{P}_{r_j}$.
 \item \label{proof:embedding_approx:collar_far_away_S} $f_{j,1}(L_j \setminus \mathring{L}_{j-1}) \subset (\C^*)^2 \setminus \overline{P}_{r_{j-1}}$.
 \item \label{proof:embedding_approx:interpolation_S} $f_{j,t}\rvert_S = f\rvert_S$ for all $t \in I$.
 \item \label{proof:embedding_approx:identity_diffeo_S} There exists a neighbourhood of $K \cup S$ on which $\phi_{j,t}$ restricts to the identity for all $t \in I$.
\end{enumerate}

Assuming this done, we may repeat the argument given previously, using condition (\ref{proof:embedding_approx:collar_far_away_S}$'$) to give properness of the limit map, and conditions (\ref{proof:embedding_approx:matching}), (\ref{proof:embedding_approx:interpolation_S}$'$) and (\ref{proof:embedding_approx:identity_diffeo_S}$'$) to obtain conclusions (\ref{thm:embedding_approx:inclusion}) and (\ref{thm:embedding_approx:interpolation}) of the theorem. The full theorem is proved.

Choosing values $r_{-1} < r_0 < 0$, so that $P_{r_{-1}} = P_{r_0} = \varnothing$, the initial case $j = 0$ is unchanged from before. We thus fix some value $j > 0$ and suppose the conditions are satisfied for $j-1$. We also suppose that there are no critical points of $\rho$ in the set $K_j \setminus \mathring{K}_{j-1}$, so that we are in the non-critical case. We explain how to modify the construction previously given for $S = \varnothing$ to ensure interpolation at the points of $S \cap (K_j \setminus \mathring{K}_{j-1})$ without changing the values $f$ takes at all other points of $S$.

We repeat the argument given previously to obtain a compact bordered Riemann surface $L_j$ whose interior contains $L_{j-1}$, a diffeotopy $\sigma_{j,t}$ of $X$ satisfying $\sigma_{j,1}(L_j) = L_{j-1}$ with $\sigma_{j,t}$ equal to the identity on $\cup_{t_0 \le t \le 1}\phi_{j-1,t}(K_{j-2})$ for some $t_0 < 1$ close to $1$, and a homotopy $f_{j,t} : X \to (\C^*)^2$ linking $f_{j-1,1} = f_{j,0}$ to some continuous map $f_{j,1} : X \to (\C^*)^2$ that together satisfy conditions (\ref{proof:embedding_approx:approximation}), (\ref{proof:embedding_approx:hol_embedding}), (\ref{proof:embedding_approx:hol_embedding_final}), (\ref{proof:embedding_approx:boundary_far_away_S}$'$), and (\ref{proof:embedding_approx:collar_far_away_S}$'$). Note that since $S \cap bK_{j-1} = \varnothing$, and since $\phi_{j-1,t}$ is the identity on a neighbourhood of $S$ by the inductive assumption, we also have $S \cap bL_{j-1} = S \cap \phi_{j-1,1}(bK_{j-1}) = \varnothing$. By Lemma \ref{lem:enlargingdomainofembedding} we may therefore ensure that $\sigma_{j,t}$ equals the identity on a neighbourhood of $S$, and that $f_{j,t}\rvert_S = f_{j-1,1}\rvert_S = f\rvert_S$ for all $t \in I$.

In the same way as before, we construct a diffeotopy $\psi_{j,t}$ of $X$ such that $\psi_{j,1}(K_j) = K_{j-1}$, and such that $\psi_{j,t}$ is the identity on a certain compact subset of $\mathring{K}_{j-1}$. Here we may also assume that $\psi_{j,t}$ is the identity on a neighbourhood of the set $S \setminus (K_j \setminus \mathring{K}_{j-1})$.

Recall that there are only finitely many points of $S$ in $K_j \setminus \mathring{K}_{j-1}$, and that in fact they all lie in the open set $\mathring{K}_j \setminus K_{j-1}$. Let $s \in S \cap (\mathring{K}_j \setminus K_{j-1})$. Then $t \mapsto \psi_{j,t}(s)$, $t \in I$, is a smoothly embedded curve in $X$ starting at $\psi_{j,0}(s) = s$ and ending at some point in $\mathring{K}_{j-1}$. Let $0 < t' < 1$ be the unique value such that $s' := \psi_{j,t'}(s) \in bK_{j-1}$. Then $\eta(t) := \psi^{-1}_{j,t}(s')$, $t \in I$, is a smoothly embedded curve in $X$ passing through $s$, starting at $s' \in bK_{j-1}$ and ending at $s'' := \psi^{-1}_{j,1}(s') \in bK_j$. Note that there could be additional points of $S$ on the curve $\eta$ besides $s$. Let $S'$ be the set of all such points, including $s$ itself.

Let $\lambda : (-1,1) \to bK_{j-1}$ be a diffeomorphism onto a neighbourhood of $s'$ in $bK_{j-1}$ such that $\lambda(0) = s'$. Because $\psi_{j,t}$ is given by the flow of a complete smooth vector field $Y$ on $X$ supported in a neighbourhood of $K_j \setminus \mathring{K}_{j-1}$, it is actually defined for all $t \in \R$. For any $0 < \delta < 1$ we may therefore construct a diffeomorphism $\theta : (-\delta,\delta) \times (-\delta, 1 + \delta) \to W_\delta$ onto a neighbourhood $W_\delta \subset X$ of the image of $\eta$ by
\[
 \theta(x,t) = \psi^{-1}_{j,t}(\lambda(x))\,,\quad x \in (-\delta,\delta), t \in (-\delta, 1 + \delta)\,.
\]

Let $\tilde\chi : X \to [0,1]$ be a smooth cutoff function with support in $W_\delta$ that equals $1$ in a neighbourhood of the image of $\eta$, and let $\chi = 1 - \tilde\chi$. Consider the smooth vector field $\tilde Y = \chi\cdot Y$ on $X$. Clearly, $\tilde Y$ is also complete and its flow $\tilde\psi_{j,t}$ equals the identity on a neighbourhood of the curve $\eta$. Define $\phi_{j,t} : X \to X$ by the formula (\ref{formula:diffeotopy}) given earlier. If we now replace $\psi_{j,t}$ by $\tilde\psi_{j,t}$ in the definition of $\phi_{j,t}$ we obtain a diffeotopy $\tilde\phi_{j,t}$ of $X$ such that $\tilde\phi_{j,t} = \phi_{j,t}$ on $K_j \setminus W_\delta$ for all $t \in I$. Note that $\tilde\phi_{j,t}$ is the identity in a neighbourhood of each point of $S'$. We see that $\tilde\eta := \tilde\phi_{j,1}\circ\eta$ is a curve starting at $\tilde s' := \tilde\phi_{j,1}(s') = \sigma^{-1}_{j,1}(\phi_{j-1,1}(\tilde\psi_{j,1}(s'))) \in bL_j$ (since $\tilde\psi_{j,1}(s') = s' \in bK_{j-1}$ and $\phi_{j-1,1}(bK_{j-1}) = bL_{j-1}$) that passes through the points of $S'$.

Note that $f_{j,1}(\tilde s') \in (\C^*)^2 \setminus \overline{P}_{r_j}$ since $\tilde s' \in bL_j$, and that $f_{j,1}(S') \subset (\C^*)^2 \setminus \overline{P}_{r_j}$ by choice of $r_j$.
Deform $f_{j,1}$ on a neighbourhood of $\tilde\eta$, keeping it fixed in a neighbourhood of $\tilde s'$ and at the points of $S'$, to obtain a map $f'$ on $X$ that is still a holomorphic embedding on a neighbourhood of $L_j$, and is now also a smooth embedding on $\tilde\eta$ that satisfies $f'(\tilde\eta) \subset (\C^*)^2 \setminus \overline{P}_{r_j}$.

Using Mergelyan's theorem, approximate $f'$ uniformly in a neighbourhood of $L_j$ and in the $\CC^1$-topology on $\tilde\eta$ by a holomorphic embedding $\tilde f'$ defined on a smaller neighbourhood of $L_j \cup \tilde\eta$. Ensure that the values of $f'$ at the points of $S'$ and at all points of $S$ inside $L_j$ do not change during this process. If the approximation is sufficiently good then $f'$ is homotopic to $\tilde f'$ in a neighbourhood of $L_j \cup \tilde\eta$ through a family of maps that are holomorphic embeddings of a neighbourhood of $L_j$. We now patch this local homotopy to $f'$ using a suitable cutoff function in the homotopy parameter, thereby giving a homotopy from $f'$ to some map $f''$ that agrees with $\tilde f'$ on a neighbourhood of $L_j \cup \tilde\eta$. Combining these homotopies with the homotopy $f_{j,t}$ and reparameterising gives a homotopy from $f_{j,0}$ to $f''$ defined on all of $X$. We continue to call this new homotopy $f_{j,t}$, so that now $f'' = f_{j,1}$.

The set $L'_j := \tilde\phi_{j,1}(K_j)$ is a compact bordered Riemann surface whose boundary agrees with that of $L_j$ everywhere except near $\tilde\eta$. Taking $\delta$ sufficiently small we can ensure that $L'_j$ is contained in the neighbourhood of $L_j \cup \tilde\eta$ where $f'' = f_{j,1}$ is a holomorphic embedding, that $f_{j,1}(bL'_j) \subset (\C^*)^2\setminus\overline{P}_{r_j}$, and that $f_{j,1}(L'_j\setminus \mathring{L}_{j-1}) \subset (\C^*)^2\setminus\overline{P}_{r_{j-1}}$. Redefining $L_j = L'_j$ and $\phi_{j,t} = \tilde\phi_{j,t}$, we are done. By repeating this argument finitely many times for the points of $S$ in $K_j\setminus \mathring{K}_{j-1}$, we can ensure all conditions are satisfied. This completes the argument for the non-critical case.

Since the points of $S$ only occur in non-critical steps in the induction process, the critical case is the same as before, except that we must keep the values of $f$ fixed at all points of $S$ throughout the construction. It is clear that this additional requirement can be incorporated.
\end{proof}


\begin{thebibliography}{99}

\bibitem{AlarconLopez2013}
Alarc\'on, A.\ and F.~L\'opez. {\it Proper holomorphic embeddings of Riemann surfaces with arbitrary topology into $\mathbf{C}^2$.} J.\ Geom.\ Anal.\ \textbf{23} (2013) 1794--1805.

\bibitem{Andersen1990}
Anders\'en, E. {\it Volume-preserving automorphisms of $\mathbf{C}^n$.} Complex Variables Theory Appl.\ \textbf{14} (1990) 223--235.

\bibitem{AndersenLempert1992}
Anders\'en, E.\ and L.~Lempert. {\it On the group of holomorphic automorphisms of $\mathbf{C}^n$.} Invent.\ Math.\ \textbf{110} (1992) 371--388.

\bibitem{CerneForstneric2002}
\v Cerne, M.\ and F.~Forstneri\v c. {\it Embedding some bordered Riemann surfaces in the affine plane.} Math.\ Res.\ Lett.\ \textbf{9} (2002) 683--696. 

\bibitem{Forstneric2011}
Forstneri\v c, F. {\it Stein manifolds and holomorphic mappings.} Ergebnisse der Math.\ (3) \textbf{56} (2011).
 
\bibitem{Forstneric2013}
Forstneri\v c, F. {\it Oka manifolds: from Oka to Stein and back.} Ann.\ Fac.\ Sci.\ Toulouse Math.\ (6) \textbf{22} (2013) 747--809. 


\bibitem{ForstnericLarusson2011}
Forstneri\v c, F. and F.\ L\'arusson. {\it Survey of Oka theory.} New York J.\ Math.\ \textbf{17A} (2011) 11--38.

\bibitem{ForstnericLowOvrelid2001}
Forstneri\v c, F., E.~L{\o}w, and N.~\O vrelid. {\it Solving the $d$- and $\overline{\partial}$-equations in thin tubes and applications to mappings.} Michigan Math.~J.\ \textbf{49} (2001) 369--416.

\bibitem{ForstnericRosay1993}
Forstneri\v c, F.\ and J.-P.~Rosay. {\it Approximation of biholomorphic mappings by automorphisms of $\mathbf{C}^n$.} Invent.\ Math.\ \textbf{112} (1993) 323--349.

\bibitem{ForstnericRosay1994}
Forstneri\v c, F.\ and J.-P.~Rosay. {\it Erratum: ``Approximation of biholomorphic mappings by automorphisms of $\mathbf{C}^n$''.} Invent.\ Math.\ \textbf{118} (1994) 573--574.

\bibitem{ForstnericSlapar2007}
Forstneri\v c, F.\ and M.~Slapar. {\it Stein structures and holomorphic mappings.} Math.\ Z.\ \textbf{256} (2007) 615--646. 

\bibitem{ForstnericSlapar2007a}
Forstneri\v c, F.\ and M.~Slapar. {\it Deformations of Stein structures and extensions of holomorphic mappings.} Math.\ Res.\ Lett.\ \textbf{14} (2007) 343--357. 

\bibitem{ForstnericWold2009}
Forstneri\v c, F.\ and E.~F.~Wold. {\it Bordered Riemann surfaces in $\mathbf{C}^2$.} J.\ Math.\ Pures Appl.\ \textbf{91} (2009) 100--114.
 
\bibitem{GlobevnikStensones1995}
Globevnik, J. and B.~Stens\o nes. {\it Holomorphic embeddings of planar domains into $\mathbf{C}^2$.} Math.\ Ann.\ \textbf{303} (1995) 579--597.

\bibitem{Gromov1989}
Gromov, M. {\it Oka's principle for holomorphic sections of elliptic bundles.} J.\ Amer.\ Math.\ Soc.\ \textbf{2} (1989) 851--897.
 
\bibitem{Kutzschebauch2009}
Kutzschebauch, F., E.~L{\o}w, and E.~F.~Wold. {\it Embedding some Riemann surfaces into $\mathbf{C}^2$ with interpolation.} Math.~Z.\ \textbf{262} (2009) 603--611.

\bibitem{LarussonRitter2012}
L\'arusson, F.\ and T.~Ritter. {\it Proper holomorphic immersions in homotopy classes of maps from finitely connected planar domains into $\mathbf{C}\times\mathbf{C}^*$.}  Indiana Univ.\ Math.\ J., to appear. \texttt{arXiv:1209.4430}.

\bibitem{Milnor1963}
Milnor, J. {\it Morse theory.} Ann.\ Math.\ Studies \textbf{51} (1963).

\bibitem{Ritter2013}
Ritter, T. {\it A strong Oka principle for embeddings of some planar domains into $\mathbf{C}\times\mathbf{C}^*$.} J.\ Geom.\ Anal.\ \textbf{23} (2013) 571--597.

\bibitem{Varolin2000}
Varolin, D. {\it The density property for complex manifolds and geometric structures. II.} Internat.\ J.\ Math.\ \textbf{11} (2000) 837--847.

\bibitem{Varolin2001}
Varolin, D. {\it The density property for complex manifolds and geometric structures.} J.\ Geom.\ Anal.\ \textbf{11} (2001) 135--160.

\bibitem{Wold2006a}
Wold, E.\ F. {\it Embedding Riemann surfaces properly into $\mathbf{C}^2$.} Internat.\ J.\ Math.\ \textbf{17} (2006) 963--974. 

\bibitem{Wold2006}
Wold, E.\ F. {\it Proper holomorphic embeddings of finitely and some infinitely connected subsets of $\mathbf{C}$ into $\mathbf{C}^2$.} Math.~Z.\ \textbf{252} (2006) 1--9. 

\end{thebibliography}
\end{document}